\newcommand{\p}[1]{\ensuremath{\mathord{\left(#1\right)}}}
\newcommand{\set}[1]{\ensuremath{\left\lbrace #1 \right\rbrace}}
\newcommand{\setcond}[2]{\ensuremath{\left\lbrace #1 \,\middle\vert \, #2 \right\rbrace}}
\newcommand{\inpr}[2]{\ensuremath{\left\langle #1, #2 \right\rangle}}
\newcommand{\RR}{\ensuremath{\mathbb{R}}}
\newcommand{\defeq}{\ensuremath{\mathrel{\mathop:}=}}
\newcommand{\RRc}{\ensuremath{\overline{\mathbb{R}}}}
\newcommand{\toset}{\ensuremath{\rightrightarrows}}
\newcommand{\Hh}{\mathcal{H}}
\newcommand{\Gg}{\mathcal{G}}
\newcommand{\NN}{\mathbb{N}}
\newcommand{\BB}{\mathbb{B}}
\newcommand{\norm}[1]{\left\Vert #1 \right\Vert}
\newcommand{\weakto}{\rightharpoonup}
\newcommand{\bop}{\mathop{\Box}}
\newcommand{\Id}{\ensuremath{\mathrm{Id}}}
\DeclareMathOperator{\dom}{dom}
\DeclareMathOperator{\zer}{zer}
\DeclareMathOperator*{\argmin}{arg\,min}
\DeclareMathOperator{\Graph}{Graph}
\DeclareMathOperator{\Dom}{Dom}
\DeclareMathOperator{\Ran}{Ran}
\DeclareMathOperator{\Proj}{Proj}
\theoremstyle{plain}
\theoremstyle{definition}
\title{Backward Penalty Schemes for Monotone Inclusion Problems}
\author{Sebastian Banert\thanks{University of Vienna, Oskar-Morgenstern-Platz 1, 1090 Vienna, Austria, sebastian.banert@univie.ac.at} \and 
Radu Ioan Bo\c{t}\thanks{University of Vienna, Faculty of Mathematics, Oskar-Morgenstern-Platz 1, 1090 Vienna, Austria, radu.bot@univie.ac.at. 
Research partially supported by DFG (German Research Foundation), project BO 2516/4-1}}
\begin{document}
\maketitle

\noindent \textbf{Abstract.} In this paper we are concerned with solving monotone inclusion problems expressed by the sum of a 
set-valued maximally monotone operator with a single-valued maximally monotone one and the normal cone to the nonempty set of zeros of another set-valued maximally monotone operator.
Depending on the nature of the single-valued operator, we will propose two iterative penalty schemes, both addressing the set-valued operators via backward steps. The
single-valued operator will be evaluated via a single forward step if it is cocoercive, and via two forward steps if it is monotone and Lipschitz continuous. The latter situation represents 
the starting point for dealing with complexly structured monotone inclusion problems from algorithmic point of view.\vspace{1ex}

\noindent \textbf{Key Words.} backward penalty algorithm, monotone inclusion, maximally monotone operator, Fitzpatrick function, subdifferential\vspace{1ex}

\noindent \textbf{AMS subject classification.} 47H05, 65K05, 90C25

\section{Introduction and preliminaries}
\subsection{Motivation}
In this article we address the solving of variational inequalities expressed as monotone inclusion problems of the form 
\begin{equation*}
    0 \in Ax + Dx + N_C\p{x},
 \end{equation*}
where $\Hh$ is a real Hilbert space, $A, B: \Hh \toset \Hh$ are (set-valued) maximally monotone operators, $D: \Hh \to \Hh$  is a (single-valued) maximally monotone operator, 
 $C := \zer B \neq \emptyset$ and $N_C$ denotes the normal cone to the set $C$. These investigations complement the ones
made in \cite{BotCsetnek:2013a} for solving monotone inclusion problems of the same form, however, whenever $B$ is a single-valued maximally monotone operator.
 
For the beginning we assume that $D$ is cocoercive and propose an iterative scheme of penalty type that evaluates $D$ via a single forward step and 
the operators $A$ and $B$ via their resolvents. Under some hypotheses expressed in terms of the Fitzpatrick function associated to the operator $B$, we prove 
weak ergodic convergence for the sequence of generated iterates, but also strong convergence, provided that $A$ is strongly monotone. If $Dx = 0$ for all $x \in \Hh$ and $B = \partial \Psi$, 
where $\Psi : \Hh \rightarrow \RRc$ is a proper, convex and lower semicontinuous function with $\min \Psi = 0$, then the iterative scheme
reduces to the algorithm proposed and investigated in \cite{AttouchCzarneckiPeypouquet:2011a} for solving the monotone inclusion problem
\begin{equation*}
    0 \in Ax + N_{\argmin \Psi}\p{x}.
 \end{equation*}

Further, by assuming that $D$ is (only) monotone and Lipschitz continuous, we provide a second iterative scheme that also addresses $A$ and $B$ via their resolvents, while $D$ is evaluated via two
forward steps. For this scheme a convergence analysis is undertaken, as well, by proving for the generated sequences of iterates weak ergodic convergence and, whenever $A$ is strongly monotone, weak convergence.
The iterative scheme and the convergence statements provided in this context constitute the starting point for solving complexly structured variational inequalities, involving mixtures of sums of maximally monotone
operators and linear compositions of parallel sums of maximally monotone operators.

We close the paper by discussing the fulfillment of the assumption expressed via the Fitzpatrick function associated to the operator $B$ for some particular instances of the latter.

\subsection{Notation and preliminary results}
For the reader's convenience we present first some notations which are used throughout the paper (see \cite{BauschkeCombettes:2011, BorweinVanderwerff:2010, Bot:2010, EkelandTemam:1999, Simons:2008, Zalinescu:2002}). By $\NN = \set{1, 2, \ldots}$ we denote the set of \emph{positive integer numbers}. Let $\Hh$ be a real Hilbert space with \emph{inner product} $\inpr{\cdot}{\cdot}$ and associated \emph{norm} $\norm{\cdot} = \sqrt{\inpr{\cdot}{\cdot}}$. The symbols $\weakto$ and $\to$ denote weak and strong convergence, respectively. When $\Gg$ is another Hilbert space and $K: \Hh \to \Gg$ is a continuous linear operator, then the \emph{norm} of $K$ is defined as $\norm{K} = \sup\setcond{\norm{Kx}}{x\in\Hh, \norm{x} \leq 1}$, while $K^*: \Gg \to \Hh$, defined by $\inpr{K^* y}{x} = \inpr{y}{Kx}$ for all $\p{x, y}\in \Hh \times \Gg$, denotes the \emph{adjoint operator} of $K$.

For a function $f: \Hh \to \RRc$ we denote by $\dom f = \setcond{x\in \Hh}{f\p{x} < +\infty}$ its \emph{effective domain} and say that $f$ is \emph{proper} if 
$\dom f \neq \emptyset$ and $f\p{x} \neq -\infty$ for all $x\in\Hh$. Let $f^*: \Hh \to \RRc$, $f^*\p{u} = \sup\setcond{\inpr{u}{x} - f\p{x}}{x\in\Hh}$ for all $u\in\Hh$, be the \emph{conjugate function} of $f$. 
The \emph{subdifferential} of $f$ at $x\in\Hh$, with $f\p{x}\in\RR$, is the set $\partial f\p{x} \defeq \setcond{v\in\Hh}{f\p{y} \geq f\p{x} + \inpr{v}{y-x} \text{ for all } y\in\Hh}$. 
We take by convention $\partial f\p{x} \defeq \emptyset$ if $f\p{x} \in \set{\pm\infty}$. 
We also denote by $\argmin f$ the set of global minima of the function $f$ and set $\min f \defeq \inf \{f(x) | x \in \argmin f\}$. The \emph{infimal convolution} of two functions $f, g: \Hh \to \RRc$ is defined as
\[
  \p{f\bop g}\p{x} \defeq \inf \setcond{f\p{y} + g\p{x - y}}{y \in \Hh},
\]
and we have $\p{f \bop g}^* = f^* + g^*$.

Let $S \subseteq \Hh$ be a nonempty set. The \emph{indicator function} of $S$, $\delta_S: \Hh \to \RRc$, is the function which takes the value $0$ on $S$ and $+\infty$ elsewhere. 
The subdifferential of the indicator function is the \emph{normal cone} of $S$, that is, $N_S\p{x} = \setcond{u\in\Hh}{\inpr{u}{y-x} \leq 0 \text{ for all } y\in S}$ if $x\in S$ and $N_S\p{x} = 
\emptyset$ for $x\notin S$. Notice that for $x\in S$, $u\in N_S\p{x}$ if and only if $\sigma_S\p{u} = \inpr{u}{x}$, where $\sigma_S$ is the \emph{support function} of $S$, defined by $\sigma_S\p{u} = 
\sup \setcond{\inpr{u}{y}}{y\in S}$.

For an arbitrary set-valued operator $M: \Hh \toset \Hh$ we denote by $\Graph M = \{(x, u) \in \Hh \times \Hh | u\in Mx\}$ its \emph{graph}, 
by $\Dom M = \setcond{x\in\Hh}{Mx\neq\emptyset}$ its \emph{domain}, by $\Ran M = \bigcup\setcond{Mx}{x\in\Hh}$ its \emph{range} and by 
$M^{-1}: \Hh \toset \Hh$ its \emph{inverse operator}, defined by $\p{u, x}\in \Graph M^{-1}$ if and only if $\p{x, u}\in \Graph M$. 

We also use the notation $\zer M = \setcond{x\in\Hh}{0\in Mx}$ for the \emph{set of zeros} of the operator $M$. We say that $M$ is \emph{monotone} if 
$\inpr{x - y}{u - v} \geq 0$ for all $\p{x, u}, \p{y, v} \in \Graph M$. A monotone operator $M$ is said to be \emph{maximally monotone} if there exists no proper monotone extension of the graph of $M$ on 
$\Hh \times \Hh$. Let us mention that in case $M$ is maximally monotone, on has the following characterization for the set of its zeros.
\begin{equation}\label{eq:maxmon_zeros}
  z\in \zer M \qquad \text{if and only if} \qquad \inpr{w}{u - z} \geq 0 \text{ for all } \p{u, w}\in\Graph M.
\end{equation}
The operator $M$ is said to be \emph{strongly monotone with parameter $\gamma > 0$} or \emph{$\gamma$-strongly monotone}, if $\inpr{x - y}{u - v} \geq \gamma \norm{x - y}^2$ for all $\p{x, u}, \p{y, v} \in \Graph{M}$. 
Notice that if $M$ is maximally monotone and strongly monotone (with a given parameter), then $\zer M$ is a singleton, thus nonempty (see \cite[Corollary 23.37]{BauschkeCombettes:2011}).

The \emph{resolvent} of $M$, $J_M : \Hh \toset \Hh$, is defined by $J_M = \p{\Id + M}^{-1}$, where $\Id: \Hh \to \Hh$, $\Id\p{x} = x$ for all $x\in\Hh$, denotes the \emph{identity operator} on $\Hh$. If 
$M$ is maximally monotone, then $J_M: \Hh \to \Hh$ is single-valued and maximally monotone (cf. \cite[Proposition 23.7 and Corollary 23.10]{BauschkeCombettes:2011}). For an arbitrary $\gamma > 0$ we have (see \cite[Proposition 23.18]{BauschkeCombettes:2011})
\begin{equation}\label{eq:maxmon_resolvent_decomposition}
  J_{\gamma M} + \gamma J_{\gamma^{-1} M^{-1}} \circ \gamma^{-1} \Id = \Id.
\end{equation}

For the convergence statements that we provide in this paper we will assume that some hypotheses, one of them expressed in terms of the Fitzpatrick function associated to a certain maximally monotone operator, are fulfilled. 
In the following we will recall some properties of this function, which brought new and deep insights into the field of maximally monotone operators in the last decade
(see \cite{BauschkeCombettes:2011, BauschkeMclarenSendov:2006, Borwein:2006, BorweinVanderwerff:2010, Bot:2010, BotCsetnek:2008, BurachikSvaiter:2002, Fitzpatrick:1988, Simons:2008} and the references therein). 
The \emph{Fitzpatrick function} associated to a monotone operator $M$, defined as
\[
  \varphi_M: \Hh \times \Hh \to \Hh \to \RRc, \qquad \varphi_M\p{x, u} = \sup\setcond{\inpr{x}{v} + \inpr{y}{u} - \inpr{y}{v}}{\p{y, v}\in \Graph M},
\]
is a convex and lower semicontinuous function. In case $M$ is maximally monotone, $\varphi_M$ is proper and it fulfills
\[
  \varphi_M\p{x, u} \geq \inpr{x}{u} \qquad \text{for all } \p{x, u}\in \Hh \times \Hh,
\]
with equality if and only if $\p{x, u}\in\Graph M$. Notice that if $f: \Hh \to \RRc$ is a proper, convex and lower semicontinuous function, then $\partial f$ is a maximally monotone operator (cf. \cite{Rockafellar:1970a}) and it holds $\p{\partial f}^{-1} = \partial f^*$. Furthermore, the following inequality is true (see \cite{BauschkeMclarenSendov:2006})
\begin{equation}\label{eq:subdifferential_Fitzpatrick}
  \varphi_{\partial f} \p{x, u} \leq f\p{x} + f^*\p{u} \qquad \text{for all } \p{x, u}\in \Hh \times \Hh.
\end{equation}
We refer the reader to \cite{BauschkeMclarenSendov:2006} for formulas of the corresponding Fitzpatrick functions computed for particular classes of monotone operators.

Let $\gamma > 0$ be arbitrary. A single-valued operator $M: \Hh \to \Hh$ is said to be \emph{$\gamma$-cocoercive} if $\inpr{x - y}{Mx - My} \geq \gamma \norm{Mx - My}^2$ for all $\p{x, y}\in \Hh \times \Hh$, and \emph{$\gamma$-Lipschitz continuous} if $\norm{Mx - My} \leq \gamma \norm{x - y}$ for all $\p{x, y}\in \Hh\times \Hh$. A single-valued linear operator $M: \Hh \to \Hh$ is said to be \emph{skew} if $\inpr{x}{Mx} = 0$ for all $x\in\Hh$.

We close the section by presenting some convergence results which will be used when carrying out a convergence analysis for the iterative schemes provided in the paper. 
Let $\p{x_n}_{n \in \NN}$ be a sequence in $\Hh$ and $\p{\lambda_k}_{k \in \NN}$ be a sequence of positive numbers such that $\sum_{k \in \NN} \lambda_k = +\infty$. 
Let $\p{z_n}_{n \in \NN}$ be the sequence of weighted averages defined as (see \cite{AttouchCzarneckiPeypouquet:2011b})
\begin{equation}\label{eq:zndef}
  z_n = \frac{1}{\tau_n} \sum_{k=1}^n \lambda_k x_k, \qquad \text{where } \tau_n = \sum_{k=1}^n \lambda_k \qquad \text{for all } n \in \NN.
\end{equation}

\begin{lemma}[Opial--Passty]\label{lem:OpialPassty}
Let $F$ be a nonempty subset of $\Hh$ and assume that the limit $\lim_{n\to +\infty} \norm{x_n - x}$ exists for every $x\in F$. 

(i) If every weak cluster point of $\p{x_n}_{n \geq 0}$  lies in $F$, then $\p{x_n}_{n \geq 0}$ converges weakly to an element in $F$ as $n\to +\infty$.

(ii) If every weak cluster point of  $\p{z_n}_{n \geq 0}$ lies in $F$, then $\p{z_n}_{n \geq 0}$ converges weakly to an element in $F$ as $n\to +\infty$.
\end{lemma}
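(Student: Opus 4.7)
The plan is to treat both parts with the same Opial scheme: first establish boundedness of the relevant sequence (so weak cluster points exist by Eberlein--\v{S}mulian), then prove that there is only one such cluster point using the polarization identity
\[
  \norm{x_n - u}^2 - \norm{x_n - v}^2 = 2\inpr{x_n}{v - u} + \norm{u}^2 - \norm{v}^2,
\]
which holds for arbitrary $u, v \in \Hh$. Under the standing hypothesis, the left-hand side converges whenever $u, v \in F$, hence the scalar sequence $\inpr{x_n}{v - u}$ converges to some limit $L \in \RR$. This is the single technical device powering both statements.

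For part~(i), boundedness of $\p{x_n}_{n \in \NN}$ is immediate: pick any $x_0 \in F$, then $\norm{x_n - x_0}$ converges and is therefore bounded. Assuming $x_{n_k} \weakto u$ and $x_{m_j} \weakto v$ with $u, v \in F$, I pass to the weak limit in $\inpr{x_n}{v-u} \to L$ along the two subsequences to obtain $\inpr{u}{v-u} = L = \inpr{v}{v-u}$, whence $\norm{u-v}^2 = 0$. Since by hypothesis every weak cluster point of $\p{x_n}_{n \in \NN}$ lies in $F$, this forces a unique weak cluster point, and weak convergence of the full sequence follows.

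For part~(ii), boundedness of $\p{z_n}_{n \in \NN}$ is obtained from convexity of $\norm{\cdot - x_0}$: the estimate
\[
  \norm{z_n - x_0} \leq \frac{1}{\tau_n}\sum_{k=1}^{n}\lambda_k \norm{x_k - x_0}
\]
together with boundedness of $\p{\norm{x_k - x_0}}_{k \in \NN}$ yields boundedness of the weighted averages. For uniqueness of weak cluster points, the crucial step is a Toeplitz/Stolz--Cesàro argument: since $\sum_{k \in \NN} \lambda_k = +\infty$ and $\inpr{x_n}{v - u} \to L$, the weighted averages satisfy
\[
  \inpr{z_n}{v - u} = \frac{1}{\tau_n}\sum_{k=1}^{n}\lambda_k \inpr{x_k}{v - u} \longrightarrow L.
\]
Passing now to the weak limit of $\p{z_n}_{n \in \NN}$ along two subsequences with limits $u, v \in F$ yields $\inpr{u}{v-u} = L = \inpr{v}{v-u}$, hence $u = v$, and weak convergence of $\p{z_n}_{n \in \NN}$ follows.

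I do not expect a real obstacle here; the proof is standard. The only point worth noting is that in (ii) one avoids the stronger (and not directly given) claim that $\lim_n \norm{z_n - x}$ exists for $x \in F$, and instead transfers convergence at the level of scalars via the weighted Cesàro step, which is exactly what makes the ergodic version work.
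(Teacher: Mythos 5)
Your proof is correct. Note that the paper itself does not prove this lemma: it is stated as a known result (the Opial lemma and Passty's ergodic extension, imported from the literature cited alongside the definition of $\p{z_n}_{n\in\NN}$), so there is no in-paper argument to compare against; your write-up is the standard one, with the polarization identity giving convergence of $\inpr{x_n}{v-u}$ for $u,v\in F$, and the Silverman--Toeplitz step correctly transferring that convergence to the weighted averages in part~(ii), which is exactly the point that makes the ergodic version go through.
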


The following result is taken from \cite{AttouchCzarneckiPeypouquet:2011b}.
\begin{lemma}\label{lem:summability}
  Let $\p{a_n}_{n \geq 0}$, $\p{b_n}_{n \geq 0}$ and $\p{\varepsilon_n}_{n \geq 0}$ be real sequences. Assume that $\p{a_n}_{n \geq 0}$ is bounded from below, $\p{b_n}_{n \geq 0}$ is nonnegative, 
  $\p{\varepsilon_n}_{n \geq 0} \in \ell^1$ and $a_{n+1} - a_n + b_n \leq \varepsilon_n$ for any $n \geq 0$. Then $\p{a_n}_{n \geq 0}$ is convergent and $\p{b_n}_{n \geq 0} \in \ell^1$.
\end{lemma}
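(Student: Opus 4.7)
The plan is to remove the $\ell^1$ error term from the recursion by a standard absorbing trick, turning the inequality $a_{n+1} - a_n + b_n \leq \varepsilon_n$ into the monotone decrease of an auxiliary sequence. Concretely, since $(\varepsilon_n)_{n \geq 0} \in \ell^1$, the tail sums $\sigma_n \defeq \sum_{k=n}^{+\infty} \varepsilon_k$ are well defined, satisfy $\sigma_n \to 0$ as $n \to +\infty$, and are uniformly bounded. I would therefore introduce
\[
  c_n \defeq a_n + \sigma_n \qquad \text{for all } n \geq 0.
\]

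The first step is to observe that $c_{n+1} - c_n = a_{n+1} - a_n - \varepsilon_n \leq -b_n \leq 0$, so $(c_n)_{n \geq 0}$ is nonincreasing. Moreover $(c_n)_{n \geq 0}$ is bounded from below because $(a_n)_{n \geq 0}$ is bounded from below by assumption and $(\sigma_n)_{n \geq 0}$ is bounded. Hence $(c_n)_{n \geq 0}$ converges in $\RR$, and then $a_n = c_n - \sigma_n$ converges as well, since $\sigma_n \to 0$.

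For the summability of $(b_n)_{n \geq 0}$, I would sum the inequality $0 \leq b_n \leq c_n - c_{n+1}$ from $n = 0$ to $n = N$, which telescopes to
\[
  \sum_{n=0}^{N} b_n \leq c_0 - c_{N+1}.
\]
The right-hand side is bounded uniformly in $N$ because $(c_n)_{n \geq 0}$ converges, so letting $N \to +\infty$ gives $\sum_{n=0}^{+\infty} b_n < +\infty$, i.e.\ $(b_n)_{n \geq 0} \in \ell^1$.

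There is no real obstacle here; the only point that requires a moment of care is ensuring that $\sigma_n$ is finite and vanishes in the limit, which uses precisely that $(\varepsilon_n)_{n \geq 0} \in \ell^1$ (not merely that the partial sums converge). Once the shifted sequence $c_n = a_n + \sigma_n$ is in place, the whole result reduces to monotone convergence of a nonincreasing sequence that is bounded from below together with a telescoping argument.
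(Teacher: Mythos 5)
Your proof is correct. The paper does not prove this lemma itself but cites Attouch--Czarnecki--Peypouquet, and your argument (absorbing the $\ell^1$ perturbation into the tail sums $\sigma_n$ so that $c_n = a_n + \sigma_n$ becomes nonincreasing and bounded below, then telescoping to sum the $b_n$) is exactly the standard proof used there.
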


\section{A backward penalty scheme with one forward step}
The problem we deal with in this section has the following formulation.
\begin{problem}\label{prob:FBB}
  Let $\Hh$ be a real Hilbert space, $A, B: \Hh \toset \Hh$ be maximally monotone operators, $D: \Hh \to \Hh$ be an $\eta$-cocoercive operator with $\eta > 0$ and suppose that 
  $C := \zer B \neq \emptyset$. The monotone inclusion problem to solve is
 \begin{equation}\label{tosolve}
    0 \in Ax + Dx + N_C\p{x}.
 \end{equation}
\end{problem}

We propose for solving Problem \ref{prob:FBB} the following iteration scheme which has the particularity that it evaluates an appropriate penalization of the operator $B$ via a backward step.

\begin{algorithm}\label{alg:FBB}
  Choose $x_0 \in \Hh$ and set for any $n \geq 1$:
  \begin{align*}
    y_{n-1} &= x_{n-1} - \lambda_n Dx_{n-1}, \\
    w_n &= J_{\lambda_n A} y_{n-1}, \\
    x_{n} &= J_{\lambda_n \beta_n B} w_n,
  \end{align*}
  where $\p{\lambda_n}_{n \in \NN}$ and $\p{\beta_n}_{n \in \NN}$ are sequences of positive real numbers. 
\end{algorithm}

\begin{remark}\label{remark1}
(a) If $Dx = 0$ for all $x \in \Hh$ and $B = \partial \Psi$, where $\Psi : \Hh \rightarrow \RRc$ is a proper, convex and lower semicontinuous function with $\min \Psi = 0$, then the iterative scheme
in Algorithm \ref{alg:FBB} reduces to the algorithm proposed and investigated in \cite{AttouchCzarneckiPeypouquet:2011a} for solving the monotone inclusion problem
\begin{equation}\label{tosolvepart}
    0 \in Ax + N_{\argmin \Psi}\p{x}.
 \end{equation}

(b) Another penalty scheme for solving the monotone inclusion problem \eqref{tosolve}, in case $B$ is a cocoercive operator, which evaluates both $B$ and $D$ via forward steps and $A$ via a backward step 
has been introduced and investigated from the point of view of its convergence properties in \cite{BotCsetnek:2013a}. The mentioned algorithm is an extension of a numerical method proposed in 
\cite{AttouchCzarneckiPeypouquet:2011b} in the context of solving \eqref{tosolvepart} when $\Psi$ is, additionally, differentiable with Lipschitz continuous gradient.
\end{remark}

The following lemma will be crucial for proving the convergence of Algorithm \ref{alg:FBB}.

\begin{lemma}\label{lem:FBB:inequality}
  For $u\in C\cap \dom A$ take $w \in \p{A + D + N_C}\p{u}$ such that $w = v + Du + p$ for some $v\in Au$ and $p\in N_C\p{u}$. For each $n \in \NN$, the following inequality holds:
 \begin{align*}
 & \norm{x_{n} - u}^2 - \norm{x_{n-1} - u}^2 + \lambda_n\p{2\eta - \lambda_n} \norm{Dx_{n-1} - Du}^2 + \\
 &  \frac{1}{2}\norm{x_{n} - w_n}^2 + \frac{1}{2}\norm{x_{n} - w_n - \lambda_n \p{Du + v}}^2 + \norm{x_{n-1} - w_n - \lambda_n\p{Dx_{n-1} - Du}}^2  \leq\\
 & 2\lambda_n \beta_n \p{\sup_{\tilde u\in C} \varphi_B\p{\tilde u, \frac{p}{\beta_n}} - \sigma_C\p{\frac{p}{\beta_n}}} + 2\lambda_n \inpr{w}{u - x_{n}} + 2\lambda_n^2 \norm{Du + v}^2. 
\end{align*}
\end{lemma}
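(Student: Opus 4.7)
The strategy is to combine three monotonicity tools (Fitzpatrick for $B$, monotonicity of $A$, cocoercivity of $D$) with telescoping squared-norm identities, and finish by completing squares and one Young-type split.

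First I would translate the two resolvent steps of Algorithm~\ref{alg:FBB} into the inclusions $(w_n, (y_{n-1}-w_n)/\lambda_n) \in \Graph A$ and $(x_n, (w_n-x_n)/(\lambda_n\beta_n)) \in \Graph B$. For the $B$-step I would invoke the Fitzpatrick inequality with the graph point $(x_n, (w_n-x_n)/(\lambda_n\beta_n))$ and the argument $(u, p/\beta_n)$, multiply through by $\lambda_n\beta_n$, and use the equality $\beta_n \sigma_C(p/\beta_n) = \langle u, p\rangle$ (valid because $p \in N_C(u)$ with $u \in C$) together with $\sup_{\tilde u \in C}\varphi_B(\tilde u, p/\beta_n) \geq \varphi_B(u, p/\beta_n)$. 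Combined with the identity $\|x_n - u\|^2 - \|w_n - u\|^2 = 2\langle u-x_n, w_n - x_n\rangle - \|x_n - w_n\|^2$, this bounds the $B$-contribution by the Fitzpatrick term on the right, plus $2\lambda_n\langle u - x_n, p\rangle$ and $-\|x_n - w_n\|^2$.

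Second, for the $A$-step I would use monotonicity of $A$ at $(w_n, (y_{n-1}-w_n)/\lambda_n)$ and $(u, v)$ to obtain $\langle w_n - u, x_{n-1} - \lambda_n Dx_{n-1} - w_n - \lambda_n v\rangle \geq 0$, and combine it with the analogous identity $\|w_n - u\|^2 - \|x_{n-1} - u\|^2 = 2\langle w_n - x_{n-1}, w_n - u\rangle - \|w_n - x_{n-1}\|^2$. Adding the $A$- and $B$-bounds, I would decompose $Dx_{n-1} + v = (Dx_{n-1} - Du) + (Du + v)$, splitting $w_n - u = (w_n - x_{n-1}) + (x_{n-1} - u)$ on the first piece and $w_n - u = (w_n - x_n) + (x_n - u)$ on the second. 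Cocoercivity of $D$ applied to $\langle x_{n-1} - u, Dx_{n-1} - Du\rangle \geq \eta\|Dx_{n-1} - Du\|^2$ handles the first piece, and the identity $Du + v = w - p$ converts the residual $\langle x_n - u, Du + v\rangle$ into $\langle w, x_n - u\rangle - \langle x_n - u, p\rangle$; the new $p$-term exactly cancels the one coming from the Fitzpatrick step, while the $w$-term produces $2\lambda_n \langle w, u - x_n\rangle$.

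Finally I would complete the squares on the surviving cross terms. The $D$-side combines cleanly: $-\|w_n - x_{n-1}\|^2 - 2\lambda_n\langle w_n - x_{n-1}, Dx_{n-1} - Du\rangle - \lambda_n^2\|Dx_{n-1} - Du\|^2 = -\|x_{n-1} - w_n - \lambda_n(Dx_{n-1} - Du)\|^2$, and together with the $-2\lambda_n\eta\|Dx_{n-1} - Du\|^2$ coming from cocoercivity this yields the $-\lambda_n(2\eta - \lambda_n)\|Dx_{n-1} - Du\|^2$ of the lemma. For the $B$-side the negative quadratic $-\|x_n - w_n\|^2$ has to serve two bookkeeping purposes: I would split it as $-\tfrac{1}{2}\|x_n - w_n\|^2 - \tfrac{1}{2}\|x_n - w_n\|^2$, keep one half on the left, and combine the other half with the Young-type bound $2\lambda_n\langle x_n - w_n, Du + v\rangle \leq \tfrac{1}{2}\|x_n - w_n\|^2 + 2\lambda_n^2\|Du + v\|^2$ and the completion-of-squares identity to produce $-\tfrac{1}{2}\|x_n - w_n - \lambda_n(Du + v)\|^2$ on the left and $2\lambda_n^2\|Du + v\|^2$ on the right.

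The main obstacle is precisely this last $B$-side step: balancing the lone negative quadratic between the two squared-norm terms that appear in the statement while correctly absorbing the mixed inner product $2\lambda_n\langle x_n - w_n, Du + v\rangle$ into the $2\lambda_n^2\|Du + v\|^2$ slack, since the single Fitzpatrick step does not by itself supply the extra firmness that would be available (as for the $D$-side) from a completion-of-squares identity alone.
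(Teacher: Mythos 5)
Your proposal follows the paper's own proof essentially step for step: the same two building blocks (monotonicity of $A$ at the resolvent point $\p{w_n,\p{y_{n-1}-w_n}/\lambda_n}$, and the Fitzpatrick inequality for $B$ evaluated at $\p{u,p/\beta_n}$ together with $\beta_n\sigma_C\p{p/\beta_n}=\inpr{p}{u}$), the same cancellation of the $p$-terms via $Du+v=w-p$, the same cocoercivity estimate, and the same completion of squares on the $D$-side; only the bookkeeping is organized a little differently (you substitute $y_{n-1}=x_{n-1}-\lambda_n Dx_{n-1}$ at the outset rather than carrying $y_{n-1}$ along). All of that part is correct.

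The one step you yourself single out as ``the main obstacle'' is, however, genuinely problematic --- and it is exactly the spot where the paper's own proof asserts an equality that is not an identity. Writing $a=x_n-w_n$ and $b=\lambda_n\p{Du+v}$, you must bound $-\norm{a}^2+2\inpr{a}{b}$ from above by $-\tfrac12\norm{a}^2-\tfrac12\norm{a-b}^2+2\norm{b}^2$. But the latter equals $-\norm{a}^2+\inpr{a}{b}+\tfrac32\norm{b}^2$, so the required inequality reduces to $\inpr{a}{b}\leq\tfrac32\norm{b}^2$, which nothing guarantees. Your recipe cannot deliver it either: the Young bound $2\inpr{a}{b}\leq\tfrac12\norm{a}^2+2\norm{b}^2$ consumes the entire $2\lambda_n^2\norm{Du+v}^2$ slack and leaves nothing with which to also extract $-\tfrac12\norm{a-b}^2$. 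The exact identity is
\begin{equation*}
-\norm{a}^2+2\inpr{a}{b}=-\tfrac12\norm{a}^2-\tfrac12\norm{a-2b}^2+2\norm{b}^2,
\end{equation*}
so the second square in the statement should read $\norm{x_n-w_n-2\lambda_n\p{Du+v}}^2$, consistent with the factor $2\lambda_n$ that does appear inside the corresponding square in Lemma~\ref{lem:FBFB:inequality}. Since this term is simply discarded in the convergence proofs (Theorems~\ref{thm:FBB} and~\ref{str-mon-FBB}), the slip is harmless downstream, but as written your final step does not close: you should either prove the corrected statement or note explicitly that the term in the form given cannot be produced by this argument.
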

\begin{proof}
Let be $n \geq 1$ fixed.

We have $\lambda_n v \in \lambda_n Au$ and $y_{n-1} - w_n \in \lambda_n A w_n$, so, by monotonicity of $A$,
\begin{equation}\label{Amon}
  \inpr{y_{n-1} - w_n - \lambda_n v}{w_n - u} \geq 0,
\end{equation}
which is equivalent to
\begin{equation}\label{eq:FBB:monA}
  2\lambda_n \inpr{v}{w_n - u} \leq 2\inpr{y_{n-1} - w_n}{w_n - u} = \norm{y_{n-1} - u}^2 - \norm{y_{n-1} - w_n}^2 - \norm{u - w_n}^2.
\end{equation}
Furthermore, we have $w_n - x_{n} \in \lambda_n \beta_n Bx_{n}$, so, by definition of the Fitzpatrick function,
\begin{align}
  &\mathrel{\phantom{=}} 2\lambda_n \beta_n \p{\sup_{\tilde u\in C} \varphi_B\p{\tilde u, \frac{p}{\beta_n}} - \sigma_C\p{\frac{p}{\beta_n}}} \geq 2\lambda_n \beta_n \p{\varphi_B\p{u, \frac{p}{\beta_n}} - \sigma_C\p{\frac{p}{\beta_n}}} \nonumber \\
  &\geq 2\inpr{u}{w_n - x_{n}} + 2\lambda_n \inpr{p}{x_{n}} - 2\inpr{x_{n}}{w_n - x_{n}} - 2\lambda_n\inpr{p}{u} \nonumber \\
  &= 2\inpr{u - x_{n}}{w_n - x_{n}} + 2\lambda_n\inpr{p}{x_{n} - u} \nonumber \\
  &= \norm{u - x_{n}}^2 + \norm{x_{n} - w_n}^2 - \norm{u - w_n}^2 + 2\lambda_n \inpr{p}{x_{n} - u}. \label{eq:FBB:FitzB}
\end{align}
Adding \eqref{eq:FBB:monA} and \eqref{eq:FBB:FitzB}, we obtain
{\allowdisplaybreaks \begin{align*}
  &\mathrel{\phantom{\leq}}\norm{x_{n} - u}^2 - \norm{x_{n-1} - u}^2 - 2\lambda_n \beta_n \p{\sup_{\tilde u\in C} \varphi_B\p{\tilde u, \frac{p}{\beta_n}} - \sigma_C\p{\frac{p}{\beta_n}}} - 2\lambda_n \inpr{w}{u - x_{n}} \nonumber \\
  &\leq \norm{u - y_{n-1}}^2 - \norm{x_{n-1} - u}^2 - \norm{y_{n-1} - w_n}^2 - \norm{x_{n} - w_n}^2 + 2\lambda_n \inpr{v}{x_{n} - w_n} \nonumber\\ 
  &\mathrel{\phantom{=}}\mathop{+} 2\lambda_n \inpr{Du}{x_{n} - u} \nonumber \\
  &= \norm{u - x_{n-1} + \lambda_n Dx_{n-1}}^2 - \norm{u - x_{n-1}}^2 - \norm{x_{n-1} - w_n - \lambda_n Dx_{n-1}}^2 - \norm{x_{n} - w_n}^2 \nonumber \\
  &\mathrel{\phantom{=}}\mathop{+} 2\lambda_n \inpr{v}{x_{n} - w_n} + 2\lambda_n \inpr{Du}{x_{n} - u} \nonumber \\
  &= 2\lambda_n\inpr{Dx_{n-1}}{u-x_{n-1}} - \norm{x_{n-1} - w_n}^2 + 2\lambda_n\inpr{Dx_{n-1}}{x_{n-1} - w_n} - \norm{x_{n} - w_n}^2 \nonumber \\
  &\mathrel{\phantom{=}}\mathop{+} 2\lambda_n \inpr{v}{x_{n} - w_n} + 2\lambda_n \inpr{Du}{x_{n} - u} \nonumber \\
  &= 2\lambda_n\inpr{Dx_{n-1}}{u - w_n} - \norm{x_{n-1} - w_n}^2 - \norm{x_{n} - w_n}^2 \nonumber \\
  &\mathrel{\phantom{=}}\mathop{+} 2\lambda_n \inpr{v}{x_{n} - w_n} + 2\lambda_n \inpr{Du}{x_{n} - u} \nonumber \\
  &= 2\lambda_n\inpr{Dx_{n-1} - Du}{u - x_{n-1}} + 2\lambda_n \inpr{Dx_{n-1}}{x_{n-1} - w_n} - \norm{x_{n-1} - w_n}^2 \nonumber \\
  &\mathrel{\phantom{=}}\mathop{-}  \norm{x_{n} - w_n}^2  + 2\lambda_n \inpr{v}{x_{n} - w_n} + 2\lambda_n \inpr{Du}{x_{n} - x_{n-1}} \nonumber \\
  &\leq -2\eta\lambda_n\norm{Dx_{n-1} - Du}^2 + 2\lambda_n \inpr{Dx_{n-1} - Du}{x_{n-1} - w_n} - \norm{x_{n-1} - w_n}^2  \nonumber \\
  &\mathrel{\phantom{=}}\mathop{-} \norm{x_{n} - w_n}^2 + 2\lambda_n \inpr{Du + v}{x_{n} - w_n} \nonumber \\
  &= - \norm{x_{n-1} - w_n - \lambda_n\p{Dx_{n-1} - Du}}^2 - \lambda_n\p{2\eta - \lambda_n} \norm{Dx_{n-1} - Du}^2 \nonumber \\
  &\mathrel{\phantom{=}}\mathop{-} \frac{1}{2}\norm{x_{n} - w_n - \lambda_n \p{Du + v}}^2  - \frac{1}{2}\norm{x_{n} - w_n}^2 + 2\lambda_n^2 \norm{Du + v}^2.
\end{align*}}
From here the conclusion is straightforward.
\end{proof}

For the convergence statement of Algorithm \ref{alg:FBB}, the following hypotheses are needed:
\begin{equation}\label{eq:Hfitz}
  \left.\begin{minipage}{12.7cm}
    \begin{enumerate}
      \item [(i)] $A + N_C$ is maximally monotone and $\zer\p{A + D + N_C} \neq \emptyset$;
      \item [(ii)] For every $p\in\Ran N_C$, $\displaystyle\sum_{n \in \NN} \lambda_n \beta_n \p{\sup_{\tilde u\in C} \varphi_B\p{\tilde u, \frac{p}{\beta_n}} - \sigma_C\p{\frac{p}{\beta_n}}} < +\infty$;
      \item [(iii)] $\p{\lambda_n}_{n \in \NN} \in \ell^2 \setminus \ell^1$.
    \end{enumerate}
  \end{minipage}
  \right\rbrace \tag{H\textsubscript{fitz}}
\end{equation}

\begin{remark}\label{remark2}
Some comments with respect to the hypotheses \eqref{eq:Hfitz} are in order.

(a) The hypotheses \eqref{eq:Hfitz} have already been used in \cite{BotCsetnek:2013a} when showing the convergence of the iterative scheme proposed for solving \eqref{tosolve} when $B$ is a cocoercive operator. Still there it was
pointed out that, since $D$ is cocoercive and $\dom D = \Hh$, $A + D + N_C$ is maximally monotone, while, in the light of the properties of the Fitzpatrick function, for every $p\in\Ran N_C$ and any $n \in \NN$ one has
$$\sup_{\tilde u\in C} \varphi_B \left(\tilde u, \frac{p}{\beta_n} \right) - \sigma_C \left (\frac{p}{\beta_n} \right) \geq 0.$$

(b) The convergence of the penalty iterative scheme proposed in \cite{AttouchCzarneckiPeypouquet:2011a} for solving the monotone inclusion problem \eqref{tosolvepart}, 
where  $\Psi : \Hh \rightarrow \RRc$ is a proper, convex and lower semicontinuous function with $\min \Psi = 0$,  have been shown under the following hypotheses:
\begin{equation}\label{eq:Hconj}
  \left.\begin{minipage}{12.7cm}
    \begin{enumerate}
      \item [(i)] $A + N_C$ is maximally monotone and $\zer\p{A + D + N_C} \neq \emptyset$;
      \item [(ii)] For every $p\in\Ran N_C$, $\displaystyle\sum_{n\in \NN} \lambda_n \beta_n \p{\Psi^*\p{\frac{p}{\beta_n}} - \sigma_C\p{\frac{p}{\beta_n}}} < +\infty$;
      \item [(iii)] $\p{\lambda_n}_{n \in \NN} \in \ell^2 \setminus \ell^1$.
    \end{enumerate}
  \end{minipage}
  \right\rbrace \tag{H}
\end{equation}
According to \eqref{eq:subdifferential_Fitzpatrick} it holds
\begin{equation*}
\sum_{n\in \NN} \lambda_n \beta_n \p{\sup_{\tilde u\in C} \varphi_{\partial \Psi}\p{\tilde u, \frac{p}{\beta_n}} - \sigma_C\p{\frac{p}{\beta_n}}} \leq \sum_{n \in \NN} \lambda_n \beta_n \p{\Psi^*\p{\frac{p}{\beta_n}} - 
\sigma_C\p{\frac{p}{\beta_n}}},
\end{equation*}
thus condition (ii) in \eqref{eq:Hconj} implies condition (ii) in \eqref{eq:Hfitz} applied to $B = \partial \Psi$. This shows that the hypothesis formulated by means of the Fitzpatrick function
extends the one from \cite{AttouchCzarneckiPeypouquet:2011a} to the more general setting considered in Problem \ref{prob:FBB}. In the last section of this paper we will discuss the fulfillment of
the hypotheses \eqref{eq:Hconj} and \eqref{eq:Hfitz} for different particular instances of the operator $B$.
\end{remark}

\begin{theorem}\label{thm:FBB}
Let $\p{x_n}_{n\geq 0}$ and $\p{w_n}_{n \in \NN}$ be the sequences generated by Algorithm \ref{alg:FBB} and $\p{z_n}_{n \in \NN}$ be the sequence defined in \eqref{eq:zndef}. 
If \eqref{eq:Hfitz} is fulfilled, then $\p{z_n}_{n \in \NN}$ converges weakly to an element in $\zer\p{A+D+N_C}$ as $n\to +\infty$.
\end{theorem}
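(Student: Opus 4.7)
The plan is to combine Lemma \ref{lem:FBB:inequality} with the two auxiliary lemmas: Lemma \ref{lem:summability} will deliver boundedness of $\p{x_n}$ and convergence of the Fej\'er-type quantity $\norm{x_n - u}$, while Lemma \ref{lem:OpialPassty}(ii) will convert these into weak convergence of the ergodic averages $\p{z_n}$, once every weak cluster point is shown to lie in $\zer\p{A + D + N_C}$.

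First, I would pick an arbitrary $u \in \zer\p{A + D + N_C}$ (nonempty by \eqref{eq:Hfitz}(i)), write $0 = v + Du + p$ with $v\in Au$, $p\in N_C\p{u}$, and apply Lemma \ref{lem:FBB:inequality} with $w = 0$. Since $\p{\lambda_n} \in \ell^2$, we have $\lambda_n \to 0$ and so for $n$ sufficiently large $\lambda_n\p{2\eta - \lambda_n} \geq 0$; all the quadratic terms on the left-hand side are then nonnegative, and on the right-hand side $\sum \lambda_n\beta_n\p{\sup_{\tilde u\in C}\varphi_B\p{\tilde u, p/\beta_n} - \sigma_C\p{p/\beta_n}} < +\infty$ by \eqref{eq:Hfitz}(ii) and $\sum \lambda_n^2 < +\infty$ by \eqref{eq:Hfitz}(iii). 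Lemma \ref{lem:summability} applied to $a_n = \norm{x_n - u}^2$ then shows that the limit $\lim_{n\to+\infty}\norm{x_n - u}$ exists, that $\p{x_n}$ is bounded, and that $\sum \norm{x_n - w_n}^2 < +\infty$ (together with summability of the other left-hand quadratics).

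The heart of the proof is to verify the cluster-point condition of Lemma \ref{lem:OpialPassty}(ii). To this end, I would fix any pair $\p{\bar u, \bar w} \in \Graph\p{A + D + N_C}$, decompose $\bar w = \bar v + D\bar u + \bar p$ with $\bar v\in A\bar u$ and $\bar p\in N_C\p{\bar u}$, and apply Lemma \ref{lem:FBB:inequality} with these data. Summing the resulting inequality from $k = 1$ to $n$, dropping the nonnegative quadratic terms on the left, and using once more \eqref{eq:Hfitz}(ii) and (iii) together with the boundedness of $\p{\norm{x_n - \bar u\!}}$, I obtain a constant $M$ (depending on $\bar u$, $\bar w$) such that
\[
  -2\sum_{k=1}^n \lambda_k \inpr{\bar w}{\bar u - x_k} \leq M \qquad \text{for all } n \in \NN.
\]
Dividing by $2\tau_n$ and recalling the definition \eqref{eq:zndef} of $z_n$ gives $\inpr{\bar w}{\bar u - z_n} \geq -M/\p{2\tau_n}$, and since $\p{\lambda_n} \notin \ell^1$ forces $\tau_n \to +\infty$, we conclude $\liminf_{n\to +\infty}\inpr{\bar w}{\bar u - z_n} \geq 0$.

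Finally, if $z$ is any weak cluster point of $\p{z_n}$, passing to the limit along a subsequence yields $\inpr{\bar w}{\bar u - z} \geq 0$ for every $\p{\bar u, \bar w}\in \Graph\p{A + D + N_C}$. Because $A + N_C$ is maximally monotone by \eqref{eq:Hfitz}(i) and $D$ is cocoercive with full domain, the sum $A + D + N_C$ is maximally monotone, so the characterization \eqref{eq:maxmon_zeros} forces $z\in\zer\p{A + D + N_C}$. Combined with the convergence of $\norm{x_n - u}$ established earlier, Lemma \ref{lem:OpialPassty}(ii) concludes the weak convergence of $\p{z_n}$ to an element of $\zer\p{A + D + N_C}$. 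The main obstacle is the passage from the one-step estimate of Lemma \ref{lem:FBB:inequality} to the cluster-point characterization via the Ces\`aro-type averaging—this is exactly where the growth condition $\p{\lambda_n}\notin \ell^1$ couples with the Fitzpatrick summability \eqref{eq:Hfitz}(ii) to kill the error terms.
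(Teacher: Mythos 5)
Your proposal is correct and follows essentially the same route as the paper: apply Lemma \ref{lem:FBB:inequality} with $w=0$ and Lemma \ref{lem:summability} to get convergence of $\norm{x_n-u}$ for $u\in\zer\p{A+D+N_C}$, then sum the inequality for an arbitrary $\p{\bar u,\bar w}\in\Graph\p{A+D+N_C}$, divide by $\tau_n\to+\infty$, and use the characterization \eqref{eq:maxmon_zeros} together with Lemma \ref{lem:OpialPassty}(ii). The only cosmetic difference is that the paper sums from $n_0+1$ onward and absorbs the finitely many initial terms into the constant, whereas you sum from $k=1$; this is equivalent provided the finitely many possibly negative terms $\lambda_k\p{2\eta-\lambda_k}\norm{Dx_{k-1}-Du}^2$ with $k<n_0$ are likewise absorbed into $M$.
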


\begin{proof}
As $\lim_{n\to +\infty} \lambda_n = 0$, there exists $n_0 \in \NN$ such that $2\eta - \lambda_n \geq 0$ for all $n\geq n_0$. Thus, for $\p{u, w} \in \Graph\p{A + D + N_C}$ such that $w = v + p + Du$, 
where $v\in Au$ and $p\in N_C\p{u}$, by Lemma \ref{lem:FBB:inequality} it holds for any $n\geq n_0$
\begin{align}\label{eq:FBB:modineq}
& \norm{x_{n} - u}^2 - \norm{x_{n-1} - u}^2 \leq \nonumber \\
& 2\lambda_n \beta_n \p{\sup_{\tilde u\in C} \varphi_B\p{\tilde u, \frac{p}{\beta_n}} - \sigma_C\p{\frac{p}{\beta_n}}} + 2\lambda_n \inpr{w}{u - x_{n}} + 2\lambda_n^2 \norm{Du + v}^2. 
  \end{align}

By Lemma \ref{lem:OpialPassty}, it is sufficient to prove that the following two statements hold:
  \begin{enumerate}
    \item \label{item:FBB:normconvergence} for every $u\in\zer\p{A+D+N_C}$ the sequence $\p{\norm{x_n - u}_{n\geq 0}}$ is convergent;
    \item \label{item:FBB:weakclusterpoints} every weak cluster point of $\p{z_n}_{n \in \NN}$ lies in $\zer\p{A+D+N_C}$.
  \end{enumerate}

\ref{item:FBB:normconvergence}. Let be an arbitrary $u\in \zer\p{A + D + N_C}$. By taking $w = 0$ in \eqref{eq:FBB:modineq}, we get
\begin{equation*}
\norm{x_{n} - u}^2 - \norm{x_{n-1} - u}^2 \leq 2\lambda_n \beta_n \p{\sup_{\tilde u\in C} \varphi_B\p{\tilde u, \frac{p}{\beta_n}} - \sigma_C\p{\frac{p}{\beta_n}}} + 2\lambda_n^2 \norm{Du + v}^2. 
  \end{equation*}
and the conclusion follows from Lemma \ref{lem:summability}.

\ref{item:FBB:weakclusterpoints}. Let $z$ be a weak cluster point of $\p{z_n}_{n \in \NN}$. As $A + D + N_C$ is maximally monotone, in order to show that $z\in\zer\p{A + D + N_C}$ 
we will use the characterization given in \eqref{eq:maxmon_zeros}. To this end we take $\p{u, w} \in \Graph\p{A + D + N_C}$ such that $w = v + p + Du$, where $v\in Au$ and $p\in N_C\p{u}$. Let $N\in \NN$ with $N \geq n_0 + 2$. Summing up for $n = n_0 + 1, \ldots, N$ the inequalities in \eqref{eq:FBB:modineq}, we get
 \begin{align*}
    \norm{x_{N} - u}^2 - \norm{x_{n_0} - u}^2 \leq L + 2\inpr{w}{\sum_{n = 1}^N \lambda_n u - \sum_{n = 1}^N \lambda_n x_{n}},
\end{align*}
 where
\begin{align*}
    L =  & 2\sum_{n=n_0 + 1}^\infty \lambda_n\beta_n \p{\sup_{\tilde u\in C} \varphi_B\p{\tilde u, \frac{p}{\beta_n}} - \sigma_C\p{\frac{p}{\beta_n}}} + 2\sum_{n = n_0 + 1}^\infty \lambda_n^2 \norm{Du + v}^2\\ 
         & + 2 \sum_{n=1}^{n_0} \lambda_n \inpr{w}{x_{n} - u}
\end{align*}
 is finite and independent from $N$. Discarding the nonnegative term $\norm{x_{N} - u}^2$ and dividing by $2\tau_N = 2 \sum_{n = 1}^N \lambda_n$, we obtain
  \[
    - \frac{\norm{x_{n_0} - u}^2}{2\tau_N} \leq \frac{L}{2\tau_N} + \inpr{w}{u - z_N}.
  \]
  By passing to the limit $N\to +\infty$ and using that $\lim_{N\to +\infty} \tau_N = +\infty$, we get
  \[
    \liminf_{N\to +\infty} \inpr{w}{u - z_N} \geq 0.
  \]
 Since $z$ is a weak cluster point of $\p{z_n}_{n \in \NN}$, we obtain that $\inpr{w}{u - z} \geq 0$. Finally, as this inequality holds for arbitrary $\p{u, w} \in \Graph\p{A + D + N_C}$, the desired conclusion follows.
\end{proof}

In the following we show that strong monotonicity of the operator $A$ ensures strong convergence of the sequence $(x_n)_{n \geq 0}$.

\begin{theorem}\label{str-mon-FBB} 
Let $(x_n)_{n \geq 0}$ and $(w_n)_{n \in \NN}$ be the sequences generated by Algorithm \ref{alg:FBB}. If \eqref{eq:Hfitz} is fulfilled and the operator $A$ is $\gamma$-strongly monotone with $\gamma>0$, 
then $(x_n)_{n \geq 0}$ converges strongly to the unique element in $\zer(A+D+N_C)$ as $n\rightarrow+\infty$.
\end{theorem}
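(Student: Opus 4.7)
The plan is to revisit Lemma \ref{lem:FBB:inequality} and upgrade the step that invoked monotonicity of $A$ to exploit $\gamma$-strong monotonicity. Specifically, in the derivation of \eqref{Amon}, we used $\lambda_n v \in \lambda_n Au$ and $y_{n-1} - w_n \in \lambda_n A w_n$ together with monotonicity of $A$; since $\lambda_n A$ is $\lambda_n\gamma$-strongly monotone, the same inclusions give
\begin{equation*}
  \inpr{y_{n-1} - w_n - \lambda_n v}{w_n - u} \geq \lambda_n \gamma \norm{w_n - u}^2,
\end{equation*}
which contributes an additional $2\lambda_n\gamma \norm{w_n-u}^2$ to the left-hand side of the estimate established in Lemma \ref{lem:FBB:inequality}. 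Everything else in that proof is unchanged.

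Next, I would observe that by \cite[Corollary 23.37]{BauschkeCombettes:2011} the assumption that $A$ is $\gamma$-strongly monotone (together with maximal monotonicity of $A+D+N_C$ from \eqref{eq:Hfitz}(i)) implies that $\zer(A+D+N_C) = \{u\}$ is a singleton. Taking this unique $u$ and the corresponding choice $w = 0 = v + Du + p$ in the enhanced inequality, and discarding the nonnegative terms that were already useful in the proof of Theorem \ref{thm:FBB}, one arrives (for all $n \geq n_0$, where $n_0$ is chosen so that $2\eta - \lambda_n \geq 0$) at
\begin{equation*}
  \norm{x_n - u}^2 - \norm{x_{n-1}-u}^2 + 2\lambda_n\gamma\norm{w_n - u}^2 \leq \varepsilon_n,
\end{equation*}
where $\varepsilon_n \defeq 2\lambda_n\beta_n\bigl(\sup_{\tilde u \in C}\varphi_B(\tilde u, p/\beta_n) - \sigma_C(p/\beta_n)\bigr) + 2\lambda_n^2\norm{Du+v}^2$ is summable by \eqref{eq:Hfitz}(ii) and (iii). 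Applying Lemma \ref{lem:summability} then yields that $(\norm{x_n - u})_{n \geq 0}$ converges and $\sum_{n \in \NN}\lambda_n\norm{w_n - u}^2 < +\infty$.

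To pass from $(w_n)$ to $(x_n)$, I would use that $u \in C = \zer B$ implies $0 \in Bu$ and hence $J_{\lambda_n\beta_n B}u = u$; since the resolvent is nonexpansive, $\norm{x_n - u} = \norm{J_{\lambda_n\beta_n B}w_n - J_{\lambda_n\beta_n B}u} \leq \norm{w_n - u}$. Combined with $\sum_{n \in \NN}\lambda_n = +\infty$ from \eqref{eq:Hfitz}(iii), the summability of $\lambda_n\norm{w_n-u}^2$ forces $\liminf_{n\to+\infty}\norm{w_n - u} = 0$, and therefore $\liminf_{n\to+\infty}\norm{x_n - u} = 0$. Since $(\norm{x_n - u})_{n \geq 0}$ is convergent, it must converge to $0$, giving strong convergence $x_n \to u$.

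The only slightly delicate point is the careful bookkeeping when inserting the extra term $2\lambda_n\gamma\norm{w_n-u}^2$ through the chain of equalities in the proof of Lemma \ref{lem:FBB:inequality}; the rest is a straightforward combination of Lemma \ref{lem:summability}, nonexpansivity of the resolvent, and the divergence of $\sum\lambda_n$.
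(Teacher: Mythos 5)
Your proof is correct, and up to the last step it is exactly the paper's argument: upgrade \eqref{Amon} to \eqref{Astrmon} using $\gamma$-strong monotonicity, carry the extra term $2\lambda_n\gamma\norm{w_n-u}^2$ additively through the chain of estimates in Lemma \ref{lem:FBB:inequality} with $w=0$, and apply Lemma \ref{lem:summability} to conclude that $\p{\norm{x_n-u}}_{n\geq 0}$ converges and $\sum_{n\in\NN}\lambda_n\norm{w_n-u}^2<+\infty$. Where you diverge is the bridge from $\p{w_n}$ back to $\p{x_n}$. The paper keeps the term $\frac{1}{2}\norm{x_n-w_n}^2$ in the enhanced inequality, deduces $\sum_n\norm{x_n-w_n}^2<+\infty$, and then combines the reverse triangle inequality $\abs{\norm{x_n-u}-\norm{x_n-w_n}}\leq\norm{w_n-u}$ with the convergence of $\norm{x_n-u}-\norm{x_n-w_n}$ and the divergence of $\sum\lambda_n$. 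You instead observe that $u\in C=\zer B$ gives $J_{\lambda_n\beta_n B}u=u$, so by nonexpansiveness of the resolvent $\norm{x_n-u}=\norm{J_{\lambda_n\beta_n B}w_n-J_{\lambda_n\beta_n B}u}\leq\norm{w_n-u}$, and then $\liminf_n\norm{x_n-u}=0$ follows directly from $\sum\lambda_n\norm{w_n-u}^2<+\infty$ and $\sum\lambda_n=+\infty$. This is a genuine (and slightly more economical) simplification: it makes the term $\norm{x_n-w_n}^2$ and the associated bookkeeping unnecessary, at the price of invoking the fixed-point property and nonexpansiveness of $J_{\lambda_n\beta_n B}$, which the paper's route avoids. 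Both arguments are sound; the factor $2\lambda_n\gamma$ versus the paper's $\lambda_n\gamma$ is immaterial.
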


\begin{proof} Let $u \in \zer(A+D+N_C)$ and $w=0=v+p+Du$, where $v \in Au$ and $p \in N_C(u)$. Since $A$ is $\gamma$-strongly monotone, inequality \eqref{Amon} becomes for any $n \in \NN$
\begin{equation}\label{Astrmon}
\inpr{y_{n-1} - w_n - \lambda_n v}{w_n - u} \geq \lambda_n\gamma\|w_n-u\|^2.
\end{equation}
Arguing as in the proof of Lemma \ref{lem:FBB:inequality} (for $w=0$) we obtain for any $n \in \NN$
\begin{align*}
 & \lambda_n\gamma\|w_n-u\|^2 + \norm{x_{n} - u}^2 - \norm{x_{n-1} - u}^2 + \lambda_n\p{2\eta - \lambda_n} \norm{Dx_{n-1} - Du}^2 + \\
 & \frac{1}{2}\norm{x_{n} - w_n}^2 + \frac{1}{2}\norm{x_{n} - w_n - \lambda_n \p{Du + v}}^2 + \norm{x_{n-1} - w_n - \lambda_n\p{Dx_{n-1} - Du}}^2  \leq\\
 & 2\lambda_n \beta_n \p{\sup_{\tilde u\in C} \varphi_B\p{\tilde u, \frac{p}{\beta_n}} - \sigma_C\p{\frac{p}{\beta_n}}} + 2\lambda_n^2 \norm{Du + v}^2. 
\end{align*}
As $\lim_{n\rightarrow+\infty}\lambda_n=0$, there exists $n_0\in\NN$ such that for all $n\geq n_0$
\begin{align*}
 & \lambda_n\gamma\|w_n-u\|^2 + \frac{1}{2}\norm{x_{n} - w_n}^2 + \norm{x_{n} - u}^2 - \norm{x_{n-1} - u}^2 \leq \\
 & 2\lambda_n \beta_n \p{\sup_{\tilde u\in C} \varphi_B\p{\tilde u, \frac{p}{\beta_n}} - \sigma_C\p{\frac{p}{\beta_n}}} + 2\lambda_n^2 \norm{Du + v}^2
\end{align*}
and, so,
\begin{align*}
 \gamma \sum_{n \geq n_0} \lambda_n\|w_n-u\|^2 + \frac{1}{2} \sum_{n \geq n_0} \norm{x_{n} - w_n}^2 & \leq \\
 \norm{x_{n_0} - u}^2 + 2 \sum_{n \geq n_0} \lambda_n \beta_n \p{\sup_{\tilde u\in C} \varphi_B\p{\tilde u, \frac{p}{\beta_n}} - \sigma_C\p{\frac{p}{\beta_n}}} + 2 \norm{Du + v}^2 \sum_{n \geq n_0} \lambda_n^2 & < +\infty. 
\end{align*}
Consequently, 
$\sum_{n \geq n_0} \lambda_n(\|x_n-u\| - \|x_n-w_n\|)^2 \leq \sum_{n \geq n_0} \lambda_n\|w_n-u\|^2 < +\infty$ and $\sum_{n \geq n_0} \norm{x_{n} - w_n}^2 < +\infty$. Since $(\|x_n-u\| - \|x_n-w_n\|)_{n \in \NN}$ is 
convergent (see the proof of Theorem \ref{thm:FBB}) and $\sum_{n \in \NN} \lambda_n=+\infty$, it follows that $\lim_{n\rightarrow+\infty} (\|x_n-u\| - \|x_n-w_n\|) = 0$ and, so,
$\lim_{n\rightarrow+\infty} \|x_n-u\|  = 0$.
\end{proof}

\section{A backward penalty scheme with two forward steps}
The problem we deal with in this section has the following formulation.
\begin{problem}\label{prob:FBFB}
  Let $\Hh$ be a real Hilbert space, $A, B: \Hh \toset \Hh$ be maximally monotone operators, $D: \Hh \to \Hh$ be an $\eta^{-1}$-Lipschitz continuous and monotone operator with $\eta > 0$ and suppose that $C 
  := \zer B \neq \emptyset$. The monotone inclusion problem to solve is
  \[
    0 \in Ax + Dx + N_C\p{x}.
  \]
\end{problem}

Problem \ref{prob:FBFB} is a generalization of Problem \ref{prob:FBB}, since every $\eta$-cocoercive operator is obviously monotone and $\eta^{-1}$-Lipschitz continuous. 
If $D = \nabla f$ for some convex and differentiable function $f: \Hh \to \RR$ with $\eta^{-1}$-Lipschitzian gradient, then 
$D$ is automatically $\eta$-cocoercive by the Baillon--Haddad theorem \cite{BaillonHaddad:1977}. The investigations we make in Section \ref{sec:PD} provide a strong motivation
for treating monotone inclusion problems in the setting of Problem \ref{prob:FBFB}.

\begin{algorithm}\label{alg:FBFB}
  Choose $x_1 \in \Hh$ and set for any $n \geq 1$
  \begin{align*}
    y_n &= x_n - \lambda_n Dx_n, \\
    p_n &= J_{\lambda_n A} y_n, \\
    q_n &= p_n - \lambda_n Dp_n, \\
    x_{n+1} &= J_{\lambda_n \beta_n B}\p{x_n - y_n + q_n},
  \end{align*}
  where $\p{\lambda_n}_{n \in \NN}$ and $\p{\beta_n}_{n \in \NN}$ are sequences of postive real numbers. For the convergence statement, the same additional hypotheses \eqref{eq:Hfitz} are needed as for Algorithm \ref{alg:FBB}.
\end{algorithm}

\begin{lemma} \label{lem:FBFB:inequality}
 For $u\in C\cap \dom A$ take $w \in \p{A + D + N_C}\p{u}$ such that $w = v + Du + p$ for some $v\in Au$ and $p\in N_C\p{u}$. 
 For each $n \in \NN$, the following inequality holds:
  \begin{align*}
    & \norm{x_{n+1} - u}^2 - \norm{x_n - u}^2 + \p{1 - \frac{4\lambda_n^2}{\eta^2}}\norm{x_n - p_n}^2 + \nonumber  \\
    & \frac{1}{2}\norm{x_{n+1} - p_n}^2 + \frac{1}{2} \norm{x_{n+1} - p_n + 2\lambda_n\p{Dp_n - Dx_n + p}}^2 \nonumber  \leq\\
    & 2\lambda_n \beta_n\p{\sup_{\tilde u\in C} \varphi_B\p{\tilde u, \frac{p}{\beta_n}} - \sigma_C\p{\frac{p}{\beta_n}}} + 2\lambda_n \inpr{w}{u - p_n} + 4 \lambda_n^2 \norm{p}^2.
  \end{align*}
\end{lemma}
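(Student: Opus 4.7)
My plan is to mimic the structure of the proof of Lemma 1, but with two new ingredients: the Fitzpatrick inequality for $B$ now involves the corrected argument $p_n+\lambda_n(Dx_n-Dp_n)$ (since this is what makes $(p_n+\lambda_n(Dx_n-Dp_n)-x_{n+1})/(\lambda_n\beta_n)$ lie in $Bx_{n+1}$), and the cocoercivity of $D$ has to be replaced by monotonicity together with Lipschitz continuity, which is the whole point of the Tseng-type correction $x_n-y_n+q_n$.

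First I would record the two basic monotonicity inequalities. From $\lambda_n v\in\lambda_n Au$ and $y_n-p_n\in\lambda_n Ap_n$, monotonicity of $A$ gives $\inpr{y_n-p_n-\lambda_n v}{p_n-u}\geq 0$, which, after using $y_n=x_n-\lambda_n Dx_n$ and the polarisation identity, yields
\[
\|p_n-u\|^2-\|x_n-u\|^2+\|x_n-p_n\|^2\leq -2\lambda_n\inpr{v+Dx_n}{p_n-u}.
\]
Next, since $p_n+\lambda_n(Dx_n-Dp_n)-x_{n+1}\in\lambda_n\beta_n Bx_{n+1}$, the Fitzpatrick inequality at the point $(u,p/\beta_n)$, combined with $\sigma_C(p/\beta_n)=\inpr{u}{p}/\beta_n$ (valid since $p\in N_C(u)$), yields after multiplication by $2\lambda_n\beta_n$ and polarisation
\[
\|x_{n+1}-u\|^2-\|p_n-u\|^2+\|p_n-x_{n+1}\|^2\leq 2\lambda_n\beta_n\bigl(\varphi_B(u,p/\beta_n)-\sigma_C(p/\beta_n)\bigr)-2\lambda_n\inpr{u-x_{n+1}}{Dx_n-Dp_n}-2\lambda_n\inpr{p}{x_{n+1}-u}.
\]

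Now I add the two inequalities so that $\|p_n-u\|^2$ cancels, and I rewrite the inner-product terms in terms of $w=v+Du+p$. Using the decomposition $v+Dx_n=(w-p)+(Dx_n-Du)$ and the simple identity
\[
\inpr{Dx_n-Du}{p_n-u}+\inpr{u-x_{n+1}}{Dx_n-Dp_n}=\inpr{Dp_n-Du}{p_n-u}+\inpr{p_n-x_{n+1}}{Dx_n-Dp_n},
\]
(obtained by splitting $u-x_{n+1}=(u-p_n)+(p_n-x_{n+1})$), the sum of the cross terms collapses to
\[
2\lambda_n\inpr{w}{u-p_n}-2\lambda_n\inpr{x_{n+1}-p_n}{p+Dp_n-Dx_n}-2\lambda_n\inpr{Dp_n-Du}{p_n-u}.
\]
Monotonicity of $D$ discards the last term. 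This is the step that replaces the cocoercivity argument from Lemma 1 and will be the main content of the calculation.

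Finally, to produce the squared term on the left, I apply $-\inpr{a}{b}=\tfrac12\|a\|^2+\tfrac12\|b\|^2-\tfrac12\|a+b\|^2$ with $a=x_{n+1}-p_n$ and $b=2\lambda_n(Dp_n-Dx_n+p)$, which generates $\tfrac12\|x_{n+1}-p_n\|^2$ (absorbable into the existing $\|p_n-x_{n+1}\|^2$ on the left), the target term $-\tfrac12\|x_{n+1}-p_n+2\lambda_n(Dp_n-Dx_n+p)\|^2$, and $2\lambda_n^2\|Dp_n-Dx_n+p\|^2$. The last is bounded by $4\lambda_n^2\|Dp_n-Dx_n\|^2+4\lambda_n^2\|p\|^2$ via the triangle inequality and then by $(4\lambda_n^2/\eta^2)\|x_n-p_n\|^2+4\lambda_n^2\|p\|^2$ via the $\eta^{-1}$-Lipschitz continuity of $D$; moving the $\|x_n-p_n\|^2$-contribution to the left produces exactly the coefficient $1-4\lambda_n^2/\eta^2$. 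Replacing $\varphi_B(u,p/\beta_n)$ by $\sup_{\tilde u\in C}\varphi_B(\tilde u,p/\beta_n)$ gives the stated inequality. The main obstacle is bookkeeping in the cross-term collapse above; once that identity is spotted, the rest is mechanical.
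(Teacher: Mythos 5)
Your proposal is correct and follows essentially the same route as the paper's proof: the monotonicity inequality for $A$, the Fitzpatrick inequality for $B$ at $x_{n+1}$ with the Tseng-corrected argument, monotonicity of $D$ to discard $\inpr{Dp_n-Du}{p_n-u}$, completion of the square in $\inpr{Dp_n-Dx_n+p}{p_n-x_{n+1}}$, and the Lipschitz bound producing the coefficient $1-4\lambda_n^2/\eta^2$. The only difference is cosmetic: you substitute $x_n-y_n+q_n=p_n+\lambda_n\p{Dx_n-Dp_n}$ before polarising, which shortens the bookkeeping relative to the paper's seven-term expansion but changes nothing essential.
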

\begin{proof} Let be $n \geq 1$ fixed. We have $\lambda_n v\in \lambda_n Au$ and $y_n - p_n \in \lambda_n A p_n$, so, by monotonicity of $\lambda_n A$,
  \[
    \inpr{y_n - p_n - \lambda_n v}{p_n - u} \geq 0,
  \]
  which is equivalent to
  \begin{equation}\label{eq:FBFB:monA}
    2\lambda_n \inpr{v}{p_n - u} \leq 2\inpr{y_n - p_n}{p_n - u} = \norm{y_n - u}^2 - \norm{y_n - p_n}^2 - \norm{p_n - u}^2.
  \end{equation}
Furthermore, we have $x_n - y_n + q_n - x_{n+1} \in \lambda_n \beta_n Bx_{n+1}$, so, by definition of the Fitzpatrick function,
  \begin{align}
    &\mathrel{\phantom{\leq}}2\lambda_n \beta_n \p{\sup_{\tilde u\in C} \varphi_B\p{\tilde u, \frac{p}{\beta_n}} - \sigma_C\p{\frac{p}{\beta_n}}} \nonumber \\
    &\geq 2\inpr{u}{x_n - y_n + q_n - x_{n+1}} + 2\lambda_n \inpr{p}{x_{n+1}} - 2\inpr{x_{n+1}}{x_n - y_n + q_n - x_{n+1}} \nonumber \\
    &\mathrel{\phantom{=}} \mathop{-} 2\lambda_n \inpr{p}{u} \nonumber \\
    &= 2\lambda_n \inpr{p}{x_{n+1} - u} + 2\inpr{u - x_{n+1}}{x_n - y_n + q_n - x_{n+1}} \nonumber \\
    &= 2\lambda_n \inpr{p}{x_{n+1} - u} - \norm{u - x_n}^2 + \norm{u - y_n}^2 - \norm{u - q_n}^2 + \norm{u - x_{n+1}}^2 \nonumber \\ 
    &\mathrel{\phantom{=}} + \norm{x_n - x_{n+1}}^2 - \norm{x_{n+1} - y_n}^2 + \norm{x_{n+1} - q_n}^2. \label{eq:FBFB:FitzB}
  \end{align}

 Adding \eqref{eq:FBFB:monA} and \eqref{eq:FBFB:FitzB}, we obtain
 {\allowdisplaybreaks \begin{align*}
    &\mathrel{\phantom{\leq}} \norm{x_{n+1} - u}^2 - \norm{x_n - u}^2 - 2\lambda_n \beta_n\p{\sup_{\tilde u\in C} \varphi_B\p{\tilde u, \frac{p}{\beta_n}} - \sigma_C\p{\frac{p}{\beta_n}}} - 2\lambda_n \inpr{w}{u - p_n} \nonumber \\
    &\leq 2 \lambda_n \inpr{p}{p_n - x_{n+1}} + 2\lambda_n \inpr{Du}{p_n - u} - \norm{y_n - p_n}^2 - \norm{u - p_n}^2 + \norm{u - q_n}^2 \nonumber \\ 
    &\mathrel{\phantom{=}} \mathop{-} \norm{x_{n+1} - x_n}^2 + \norm{x_{n+1} - y_n}^2 - \norm{x_{n+1} - q_n}^2 \nonumber \\
    &= 2 \lambda_n \inpr{p}{p_n - x_{n+1}} + 2\lambda_n \inpr{Du}{p_n - u} - \norm{x_n - \lambda_n Dx_n - p_n}^2 - \norm{u - p_n}^2 \nonumber \\
    &\mathrel{\phantom{=}} \mathop{+} \norm{u - p_n + \lambda_n Dp_n}^2 - \norm{x_{n+1} - x_n}^2 + \norm{x_{n+1} - x_n + \lambda_n Dx_n}^2 \nonumber\\ 
    &\mathrel{\phantom{=}} \mathop{-} \norm{x_{n+1} - p_n + \lambda_n Dp_n}^2 \nonumber \\
    &= 2 \lambda_n \inpr{p}{p_n - x_{n+1}} + 2\lambda_n \inpr{Du}{p_n - u} - \norm{x_n - p_n}^2 + 2\lambda_n\inpr{Dx_n}{x_n - p_n} \nonumber \\ 
    &\mathrel{\phantom{=}} \mathop{-}  \lambda_n^2 \norm{Dx_n}^2   + \lambda_n^2 \norm{Dp_n}^2 + 2\lambda_n\inpr{Dp_n}{u - p_n} + \lambda_n^2 \norm{Dx_n}^2 + 2\lambda_n \inpr{Dx_n}{x_{n+1} - x_n} \nonumber \\
    &\mathrel{\phantom{=}} \mathop{-} \norm{x_{n+1} - p_n}^2 - 2\lambda_n \inpr{x_{n+1} - p_n}{Dp_n} - \lambda_n^2 \norm{Dp_n}^2 \nonumber\\ 
    &= 2 \lambda_n \inpr{p}{p_n - x_{n+1}} + 2\lambda_n \inpr{Du}{p_n - u} - \norm{x_n - p_n}^2 + 2\lambda_n\inpr{Dp_n}{u - x_{n+1}} \nonumber \\
    &\mathrel{\phantom{=}} \mathop{+} 2\lambda_n \inpr{Dx_n}{x_{n+1} - p_n} - \norm{x_{n+1} - p_n}^2 \nonumber \\
    &= 2\lambda_n \inpr{Du - Dp_n}{p_n - u} + 2\lambda_n \inpr{Dp_n - Dx_n + p}{p_n - x_{n+1}} \nonumber\\ 
    &\mathrel{\phantom{=}} \mathop{-} \norm{x_{n+1} - p_n}^2 - \norm{x_n - p_n}^2 \nonumber \\
    &\leq - \norm{x_{n+1} - p_n}^2 - \norm{x_n - p_n}^2   + 2\lambda_n \inpr{Dp_n - Dx_n + p}{p_n - x_{n+1}} \nonumber\\ 
    &= -\norm{x_n - p_n}^2 - \frac{1}{2}\norm{x_{n+1} - p_n}^2 - \frac{1}{2} \norm{x_{n+1} - p_n + 2 \lambda_n\p{Dp_n - Dx_n + p}}^2 \nonumber \\
    &\mathrel{\phantom{=}} \mathop{+} 2 \lambda_n^2 \norm{Dp_n - Dx_n + p}^2 \nonumber \\
    &\leq -\p{1 - \frac{4 \lambda_n^2}{\eta^2}}\norm{x_n - p_n}^2 - \frac{1}{2}\norm{x_{n+1} - p_n}^2 \nonumber \\
    &\mathrel{\phantom{=}} \mathop{-} \frac{1}{2} \norm{x_{n+1} - p_n + 2 \lambda_n \p{Dp_n - Dx_n + p}}^2 + 4 \lambda_n^2 \norm{p}^2,
  \end{align*}}
which leads to the desired conclusion.  
\end{proof}

\begin{theorem}\label{thm:FBFB}
 Let $\p{x_n}_{n \in \NN}$, $\p{p_n}_{n \in \NN}$, $\p{y_n}_{n \in \NN}$ and $\p{q_n}_{n \in \NN}$ be the sequences generated by Algorithm \ref{alg:FBFB} and $\p{z_n}_{n \in \NN}$ be the sequence defined in \eqref{eq:zndef}. 
 If \eqref{eq:Hfitz} is fulfilled, then $\p{z_n}_{n \in \NN}$ converges weakly to an element in $\zer\p{A + D + N_C}$ as $n\to\infty$.
\end{theorem}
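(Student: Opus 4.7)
The plan is to mirror the proof of Theorem~\ref{thm:FBB}, using Lemma~\ref{lem:FBFB:inequality} in place of Lemma~\ref{lem:FBB:inequality} and concluding via Lemma~\ref{lem:OpialPassty}(ii). Since $\p{\lambda_n}_{n \in \NN} \in \ell^2$ forces $\lambda_n \to 0$, there is some $n_0 \in \NN$ such that $1 - 4\lambda_n^2/\eta^2 \geq 1/2$ for every $n \geq n_0$, which ensures that all the quadratic terms on the left-hand side of the inequality of Lemma~\ref{lem:FBFB:inequality} are nonnegative and, in particular, usefully summable.

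First I would fix $u \in \zer\p{A + D + N_C}$, decompose $0 = v + Du + p$ with $v \in Au$ and $p \in N_C\p{u}$, and apply Lemma~\ref{lem:FBFB:inequality} with this choice (so $w = 0$). Hypotheses \eqref{eq:Hfitz}(ii) and \eqref{eq:Hfitz}(iii) make the right-hand side summable in $n$, so Lemma~\ref{lem:summability} yields both that $\p{\norm{x_n - u}}_{n \in \NN}$ is convergent and that $\sum_{n \in \NN} \p{1 - 4\lambda_n^2/\eta^2}\norm{x_n - p_n}^2 < +\infty$. Combined with the lower bound $1 - 4\lambda_n^2/\eta^2 \geq 1/2$ for $n \geq n_0$, this gives $\norm{x_n - p_n} \to 0$, a piece of information that has no analogue in the proof of Theorem~\ref{thm:FBB} but will be crucial below.

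Second, for an arbitrary $\p{u, w} \in \Graph\p{A + D + N_C}$ with $w = v + Du + p$, I would sum the inequality of Lemma~\ref{lem:FBFB:inequality} for $n = n_0 + 1, \ldots, N$, absorb the boundary contributions $2\sum_{n = 1}^{n_0}\lambda_n \inpr{w}{p_n - u}$ together with the summable penalty and quadratic terms into a single finite constant $L$ independent of $N$, discard the nonnegative $\norm{x_{N+1} - u}^2$, and divide by $2\tau_N$. Passing to $\liminf_{N \to +\infty}$ and using $\tau_N \to +\infty$ then yields $\liminf_{N \to +\infty}\inpr{w}{u - \widetilde{z}_N} \geq 0$, where $\widetilde{z}_N \defeq \tau_N^{-1}\sum_{n = 1}^N \lambda_n p_n$.

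The step I expect to require the most care, and the only real deviation from the proof of Theorem~\ref{thm:FBB}, is transferring this information from $\widetilde{z}_N$ to $z_N$: the key inequality now produces inner products against $u - p_n$ rather than $u - x_n$. This is settled by the Ces\`aro-type estimate $\norm{z_N - \widetilde{z}_N} \leq \tau_N^{-1}\sum_{n = 1}^N \lambda_n \norm{x_n - p_n}$, which tends to zero because $\norm{x_n - p_n} \to 0$ and $\tau_N \to +\infty$; thus $\p{z_N}$ and $\p{\widetilde{z}_N}$ share weak cluster points, and any weak cluster point $z$ of $\p{z_N}$ satisfies $\inpr{w}{u - z} \geq 0$. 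Since this holds for every $\p{u, w} \in \Graph\p{A + D + N_C}$, the maximal monotonicity granted by \eqref{eq:Hfitz}(i) combined with \eqref{eq:maxmon_zeros} forces $z \in \zer\p{A + D + N_C}$, and an application of Lemma~\ref{lem:OpialPassty}(ii) with $F = \zer\p{A + D + N_C}$ finishes the proof.
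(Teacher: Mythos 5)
Your proof is correct and takes essentially the same route as the paper, whose own proof of this theorem consists of invoking Lemma~\ref{lem:FBFB:inequality} and declaring the rest ``analogous'' to Theorem~\ref{thm:FBB}. The one point you flag as delicate --- that the key inequality controls averages of $p_n$ rather than of $x_n$ --- is precisely where that analogy is not literal, and your repair (summability of $\norm{x_n - p_n}^2$ extracted from the lemma via $1 - 4\lambda_n^2/\eta^2 \geq 1/2$, hence $\norm{x_n - p_n} \to 0$, hence $z_N - \widetilde z_N \to 0$ by the Ces\`aro estimate, so the two sequences share weak cluster points) is exactly the argument needed to make the paper's claim rigorous; everything else matches the paper's proof of Theorem~\ref{thm:FBB} step for step.
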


\begin{proof}
As $\lim_{n\to +\infty} \lambda_n = 0$, there exists $n_0 \in \NN$ such that $1 - \frac{4 \lambda_n^2}{\eta^2} \geq 0$ for all $n\geq n_0$. Thus, for $\p{u, w} \in \Graph\p{A + D + N_C}$ such that $w = v + p + Du$, 
where $v\in Au$ and $p\in N_C\p{u}$, by Lemma \ref{lem:FBFB:inequality} it holds for any $n\geq n_0$
\begin{align}\label{eq:FBFB:modineq}
& \norm{x_{n+1} - u}^2 - \norm{x_n - u}^2  \leq \nonumber  \\
& 2\lambda_n \beta_n\p{\sup_{\tilde u\in C} \varphi_B\p{\tilde u, \frac{p}{\beta_n}} - \sigma_C\p{\frac{p}{\beta_n}}} + 2\lambda_n \inpr{w}{u - p_n} + 4 \lambda_n^2 \norm{p}^2.
\end{align}

Analogously to the proof of Theorem \ref{thm:FBB}, one obtains from here that:
  \begin{enumerate}
    \item \label{item:FBFB:normconvergence} for every $u\in\zer\p{A+D+N_C}$ the sequence $\p{\norm{x_n - u}_{n \in \NN}}$ is convergent;
    \item \label{item:FBFB:weakclusterpoints} every weak cluster point of $\p{z_n}_{n \in \NN}$ lies in $\zer\p{A+D+N_C}$.
  \end{enumerate}
The conclusion follows by using again Lemma \ref{lem:OpialPassty}.
\end{proof}

Arguing in the same way as in Theorem \ref{str-mon-FBB}, one can show that the strong monotonicity of $A$ guarantees strong convergence of the sequence $(x_n)_{n \in \NN}$.

\begin{theorem}\label{str-mon-FBFB} 
Let $\p{x_n}_{n \in \NN}$, $\p{p_n}_{n \in \NN}$, $\p{y_n}_{n \in \NN}$ and $\p{q_n}_{n \in \NN}$ be the sequences generated by Algorithm \ref{alg:FBFB}. 
If \eqref{eq:Hfitz} is fulfilled and the operator $A$ is $\gamma$-strongly monotone with $\gamma>0$, then $(x_n)_{n \in \NN}$ converges strongly to the unique element in $\zer(A+D+N_C)$ as $n\rightarrow+\infty$.
\end{theorem}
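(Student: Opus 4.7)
The plan is to follow the strategy of Theorem \ref{str-mon-FBB} essentially verbatim, replacing the sequence $(w_n)$ with $(p_n)$ and using the fact that strong monotonicity of $A$ upgrades the key monotonicity inequality inside the proof of Lemma \ref{lem:FBFB:inequality}. Concretely, since $A$ is $\gamma$-strongly monotone, the estimate $\inpr{y_n - p_n - \lambda_n v}{p_n - u} \geq \lambda_n \gamma \norm{p_n - u}^2$ replaces the non-strong bound used to derive \eqref{eq:FBFB:monA}, which adds an extra term $2\lambda_n\gamma\norm{p_n-u}^2$ to the left-hand side of the conclusion of Lemma \ref{lem:FBFB:inequality}; this term passes additively through the subsequent algebraic manipulations without interfering with the cancellations. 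Hence, specialising to $u \in \zer(A+D+N_C)$ and $w = 0$, one obtains for every $n \geq 1$
\begin{align*}
& 2\lambda_n\gamma \norm{p_n - u}^2 + \norm{x_{n+1} - u}^2 - \norm{x_n - u}^2 + \p{1 - \tfrac{4\lambda_n^2}{\eta^2}}\norm{x_n - p_n}^2 + \tfrac{1}{2}\norm{x_{n+1} - p_n}^2 \\
& \leq 2\lambda_n \beta_n \p{\sup_{\tilde u\in C} \varphi_B\p{\tilde u, \tfrac{p}{\beta_n}} - \sigma_C\p{\tfrac{p}{\beta_n}}} + 4\lambda_n^2 \norm{p}^2.
\end{align*}

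Next I would fix $n_0 \in \NN$ such that $1 - 4\lambda_n^2/\eta^2 \geq 0$ for all $n \geq n_0$, which exists because $(\lambda_n)\in\ell^2$ forces $\lambda_n\to 0$. Telescoping the above inequality from $n_0$ to $N$ and passing to the limit, the hypotheses \eqref{eq:Hfitz} guarantee that the cumulative right-hand side is finite, so
\[
\sum_{n\geq n_0} \lambda_n \norm{p_n - u}^2 < +\infty \quad \text{and} \quad \sum_{n\geq n_0} \norm{x_{n+1} - p_n}^2 < +\infty.
\]
In particular $\norm{x_{n+1} - p_n} \to 0$.

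For the final step I would invoke the reverse triangle inequality $(\norm{x_{n+1}-u} - \norm{x_{n+1}-p_n})^2 \leq \norm{p_n - u}^2$ to deduce that $\sum_{n\geq n_0} \lambda_n (\norm{x_{n+1}-u} - \norm{x_{n+1}-p_n})^2 < +\infty$. Since Theorem \ref{thm:FBFB} already yields convergence of $(\norm{x_n - u}\!)_{n\in\NN}$ and $\norm{x_{n+1}-p_n} \to 0$, the sequence $(\norm{x_{n+1}-u} - \norm{x_{n+1}-p_n})$ is convergent, and because $\sum \lambda_n = +\infty$ its limit must be $0$. Combined once more with $\norm{x_{n+1}-p_n}\to 0$, this gives $\norm{x_n - u} \to 0$.

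The main obstacle is verifying that the extra strong-monotonicity term truly survives the long chain of equalities and rearrangements in the proof of Lemma \ref{lem:FBFB:inequality} without being absorbed by the Young-type splittings of $\norm{x_{n+1} - p_n + 2\lambda_n(Dp_n - Dx_n + p)}^2$; once that bookkeeping is checked, the rest is a direct transcription of the FBB case.
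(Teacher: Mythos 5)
Your proposal is correct and is essentially the argument the paper intends: the paper gives no separate proof of Theorem \ref{str-mon-FBFB}, stating only that one argues as in Theorem \ref{str-mon-FBB}, and your transcription (extra term $2\lambda_n\gamma\norm{p_n-u}^2$ from strong monotonicity passing additively through Lemma \ref{lem:FBFB:inequality}, summation to get $\sum_n \lambda_n\norm{p_n-u}^2<+\infty$ and $\sum_n\norm{x_{n+1}-p_n}^2<+\infty$, then the reverse triangle inequality combined with the convergence of $\norm{x_n-u}$ and $\sum_n\lambda_n=+\infty$) is exactly that adaptation. The bookkeeping concern you flag is harmless, since the strong-monotonicity term enters only on the left-hand side of \eqref{eq:FBFB:monA} and all subsequent rearrangements act on the right-hand side.
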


\section{A primal-dual algorithm based on a backward penalty scheme}\label{sec:PD}
In this section, we will derive a primal-dual algorithm for solving complexly structured monotone inclusion problems based on the backward penalty iterative scheme provided in Algorithm \ref{alg:FBFB}. The
problem under investigation is the following one.

\begin{problem}\label{prob:PD}
 Let $\Hh$ be a real Hilbert space, $A: \Hh \toset \Hh$ a maximally monotone operator and $D: \Hh \to \Hh$ a monotone and $\eta^{-1}$-Lipschitz continuous operator for some $\eta > 0$. Furthermore, 
 let $m\geq 1$ and for any $i \in \set{1, \ldots, m}$, let $\Gg_i$ be real Hilbert spaces, $A_i: \Gg_i \toset \Gg_i$ maximally monotone operators, $D_i: \Gg_i \toset \Gg_i$ be maximally monotone operators such that 
$D_i^{-1}$ are $\nu_i^{-1}$-Lipschitz continuous for some $\nu_i > 0$ and $L_i: \Hh\to \Gg_i$  nonzero linear continuous operators.  Consider also $B: \Hh \toset \Hh$ a maximally monotone operator and 
suppose that $C := \zer B \neq \emptyset$. The monotone inclusion problem to solve is to find $x\in \Hh$ with
  \begin{equation}\label{eq:PD:primal}
    0 \in Ax + \sum_{i = 1}^m L_i^* \p{A_i \bop D_i}\p{L_i x} + Dx + N_C\p{x},
  \end{equation}
together with its dual monotone inclusion problem in the sense of Attouch--Th\'era \cite{AttouchThera:1996} of finding $v_i \in \Gg_i$, $i = 1, \ldots, m$, satisfying
  \begin{equation}\label{eq:PD:dual}
    \exists x \in \Hh: \qquad v_i\in \p{A_i \bop D_i}\p{L_i x} \quad \text{and} \quad 0 \in Ax + \sum_{i = 1}^m L_i^* v_i + Dx + N_C\p{x}.
  \end{equation}
\end{problem}

We introduce the real Hilbert space $\bm\Hh \defeq \Hh \times \Gg_1 \times \ldots \times \Gg_m$, the operators
\begin{align}\label{defoper}
\bm A : \bm \Hh \toset \bm \Hh, \  \bm A\p{x, v_1, \ldots, v_m} &= Ax \times A_1^{-1} v_1 \times \ldots \times A_m^{-1} v_m, \nonumber \\
\bm D: \bm \Hh \to \bm \Hh, \ \bm D\p{x, v_1, \ldots, v_m} &= \p{\sum_{i=1}^m L_i^* v_i + Dx, D_1^{-1} v_1 - L_1 x, \ldots, D_m^{-1} v_m - L_m x}, \nonumber \\
\bm B: \bm \Hh \toset \bm \Hh, \  \bm B\p{x, v_1, \ldots, v_m} &= Bx \times \set{0} \times \ldots \times \set{0},
\end{align}
and the set 
$$\bm C :=\setcond{\bm x \in \bm \Hh}{\bm B \bm x = \bm 0} = \zer B \times \Gg_1 \times \ldots \times \Gg_m.$$ 
In this setting, we have for $\bm x = \p{x, v_1, \ldots, v_m} \in \Hh$
\begin{align}
  \bm 0 \in \p{\bm A + \bm D +  N_{\bm C}}\bm x &\iff \set{\begin{matrix}0 \in Ax + \sum_{i=1}^m L_i^* v_i + Dx + N_C\p{x} \\ 0\in A_1^{-1} v_1 + D_1^{-1} v_1 - L_1 x \\ \vdots \\ 0\in A_m^{-1} v_m + D_m^{-1} v_m - L_m x\end{matrix}} \nonumber \\
  &\iff \set{\begin{matrix}0 \in Ax + \sum_{i=1}^m L_i^* v_i + Dx  + N_C\p{x} \\ v_i \in \p{A_i \bop D_i}\p{L_i x}, i = 1, \ldots, m\end{matrix}} \nonumber \\
  &\ \Longrightarrow x \text{ satisfies } \eqref{eq:PD:primal} \text{ and } \p{v_1, \ldots, v_m} \text{ satisfies } \eqref{eq:PD:dual}. \label{eq:PD:equivalence}
\end{align}
The resolvent of $\bm B$ is given by
\[
  J_{\gamma \bm B}\p{x, v_1, \ldots, v_m} = \p{J_{\gamma B}x, v_1, \ldots, v_m}
\]
and its Fitzpatrick function $\varphi_{\bm B} : \bm \Hh \times \bm \Hh \rightarrow \overline \RR$  by
\begin{align*}
  \varphi_{\bm B}\p{x, v_1, \ldots, v_m, x^*, v_1^*, \ldots, v_m^*} &= \sup_{\substack{y\in\Hh \\ y^* \in By \\ w_i \in \Gg_i, i = 1,\ldots, m}}\hspace{-0.5cm} \set{\inpr{x}{y^*} + \inpr{y}{x^*} + \sum_{i=1}^m \inpr{w_i}{v_i^*} - \inpr{y}{y^*}} \\
  &= \begin{cases}\varphi_B\p{x, x^*},& \text{if } v_i^* = 0, \ i = 1, \ldots, m, \\ +\infty,&\text{otherwise.}\end{cases}
\end{align*}
Thus, in order to solve the primal-dual pair of monotone inclusion problems \eqref{eq:PD:primal}--\eqref{eq:PD:dual}, one has to solve
\begin{equation}\label{probprodspace}
\bm 0 \in \bm A \bm x + \bm D \bm x +  N_{\bm C}(\bm x)
\end{equation}
 in the product space $\bm \Hh$. By doing this via Algorithm \ref{alg:FBFB} one obtains the following iterative scheme:
\begin{algorithm}\label{alg:PD}
  Choose $x_1 \in \Hh$ and $v_{i, 1} \in \Gg_i$, $i = 1, \ldots, m$, and set for any $n \geq 1$
  \begin{align*}
    y_{1, n} &= x_n - \lambda_n\p{\sum_{i=1}^m L_i^* v_{i, n} - Dx_n}, \\
    y_{2, i, n} &= v_{i, n} - \lambda_n \p{D_i^{-1} v_{i, n} - L_i x_n}, i = 1, \ldots, m, \\
    p_{1, n} &= J_{\lambda_n A} y_{1, n}, \\
    p_{2, i, n} &= J_{\lambda_n A_i^{-1}} y_{2, i, n}, i = 1, \ldots, m, i = 1, \ldots, m, \\
    q_{n} &= p_{1, n} - \lambda_n \p{\sum_{i=1}^m L_i^* p_{2, i, n} - Dp_{1, n}}, \\
    v_{i, n+1} &= v_{i, n} - y_{2, i, n} + p_{2, i, n} - \lambda_n \p{D_i^{-1} p_{2, i, n} - L_i p_{1, n}}, i = 1, \ldots, m, \\
    x_{n+1} &= J_{\lambda_n \beta_n B}\p{x_n - y_{1, n} + q_n}.
  \end{align*}
where $\p{\lambda_n}_{n \in \NN}$ and $\p{\beta_n}_{n \in \NN}$ are sequences of postive real numbers.
\end{algorithm}
For its convergence the following hypotheses are needed:
\begin{equation}\label{eq:Hfitz:comp}
  \left.\begin{minipage}{14.2cm}
    \begin{enumerate}
      \item[(i)]  $A + N_C$ is maximally monotone and $\zer\p{A + \sum\limits_{i = 1}^m L_i^* \circ \p{A_i \bop D_i} \circ L_i + D + N_C}$ $\neq \emptyset$;
      \item [(ii)] For every $p\in\Ran N_C$, $\displaystyle\sum_{n\in \NN} \lambda_n \beta_n \p{\sup_{\tilde u\in C} \varphi_B\p{\tilde u, \frac{p}{\beta_n}} - \sigma_C\p{\frac{p}{\beta_n}}} < +\infty$;
      \item [(iii)] $\p{\lambda_n}_{n\geq 0} \in \ell^2 \setminus \ell^1$.
    \end{enumerate}
  \end{minipage}
  \right\rbrace \tag{H\textsubscript{pd}}
\end{equation}

\begin{theorem}\label{thm:PD}
Consider the sequences generated by Algorithm \ref{alg:PD} and assume that \eqref{eq:Hfitz:comp} is fulfilled. Then the sequence $\p{z_n}_{n \in \NN}$ defined in \eqref{eq:zndef} converges weakly to a solution of \eqref{eq:PD:primal} 
and $\p{\frac{1}{\sum_{k = 1}^n \lambda_k} \sum_{k = 1}^n \lambda_k (v_{1, k},..,v_{m,k})}_{n \in \NN}$ converges weakly to a solution of \eqref{eq:PD:dual} as $n \rightarrow +\infty$. If, additionally, $A$ and $A_i^{-1}, i=1,...,m,$ 
are strongly monotone, then $(x_n)_{n \in \NN}$ converges strongly to the unique solution of \eqref{eq:PD:primal} and $(v_{1, n},..,v_{m, n})_{n \in \NN}$ converges strongly to the unique 
solution of \eqref{eq:PD:dual} as $n \rightarrow +\infty$.
\end{theorem}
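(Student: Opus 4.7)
The plan is to reduce Problem \ref{prob:PD} to an instance of Problem \ref{prob:FBFB} in the product Hilbert space $\bm \Hh = \Hh \times \Gg_1 \times \ldots \times \Gg_m$, using the operators $\bm A$, $\bm D$, $\bm B$ defined in \eqref{defoper} and the set $\bm C = C \times \Gg_1 \times \ldots \times \Gg_m$, and then to invoke Theorem \ref{thm:FBFB} and Theorem \ref{str-mon-FBFB} in $\bm \Hh$. The equivalence \eqref{eq:PD:equivalence} already shows that a zero of $\bm A + \bm D + N_{\bm C}$ yields, in its first component, a solution of the primal inclusion \eqref{eq:PD:primal} and, in its remaining components, a solution of the dual inclusion \eqref{eq:PD:dual}, so once convergence in $\bm \Hh$ is established, the conclusion for primal and dual sequences follows immediately.

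First I would verify that the hypotheses of Theorem \ref{thm:FBFB} hold for $\bm A$, $\bm D$, $\bm B$. Maximal monotonicity of $\bm A$ and $\bm B$ is clear since they are products of maximally monotone operators and $B$ with a zero operator. A direct computation shows that the skew part of $\bm D$ (involving $L_i, L_i^*$) cancels in $\langle \bm x - \bm y, \bm D \bm x - \bm D \bm y\rangle$, leaving only the contributions from $D$ and from each $D_i^{-1}$, both of which are nonnegative; hence $\bm D$ is monotone. Using $\|\cdot + \cdot\|^2 \leq 2\|\cdot\|^2 + 2\|\cdot\|^2$ together with the Lipschitz constants of $D$ and of the $D_i^{-1}$ and the operator norms $\|L_i\|$, one obtains a finite Lipschitz modulus $\bm \eta^{-1}$ for $\bm D$. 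For the assumptions in \eqref{eq:Hfitz} transferred to the product space: (i) $\bm A + N_{\bm C} = (A + N_C) \times A_1^{-1} \times \ldots \times A_m^{-1}$ is maximally monotone by (i) of \eqref{eq:Hfitz:comp}, and the equivalence \eqref{eq:PD:equivalence} combined with the nonemptiness of the primal solution set gives $\zer(\bm A + \bm D + N_{\bm C}) \neq \emptyset$; (ii) because $\Ran N_{\bm C} = \Ran N_C \times \{0\} \times \ldots \times \{0\}$ and the formula for $\varphi_{\bm B}$ already established in the paper yields $\varphi_{\bm B}(\tilde{\bm u}, \bm p/\beta_n) = \varphi_B(\tilde u, p/\beta_n)$ for $\bm p = (p, 0, \ldots, 0)$ and $\tilde{\bm u} = (\tilde u, \cdot, \ldots, \cdot) \in \bm C$, while $\sigma_{\bm C}(\bm p/\beta_n) = \sigma_C(p/\beta_n)$, the condition (ii) of \eqref{eq:Hfitz:comp} is exactly the product-space condition (ii); (iii) is unchanged.

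Next I would check that Algorithm \ref{alg:PD} coincides with Algorithm \ref{alg:FBFB} applied to $(\bm A, \bm D, \bm B)$ on the iterate $\bm x_n = (x_n, v_{1,n}, \ldots, v_{m,n})$. Componentwise evaluation of $\bm D \bm x_n$ produces $y_{1,n}$ and the $y_{2,i,n}$; the resolvent $J_{\lambda_n \bm A}$ decomposes into $J_{\lambda_n A}$ and the $J_{\lambda_n A_i^{-1}}$, yielding $p_{1,n}$ and the $p_{2,i,n}$; evaluating $\bm D \bm p_n$ gives $q_n$ together with $(i+1)$-th components matching the update used for $v_{i,n+1}$; and $J_{\lambda_n \beta_n \bm B}$ acts as $J_{\lambda_n \beta_n B}$ on the first coordinate and as identity on the remaining ones. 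Hence Theorem \ref{thm:FBFB} applies and the weighted averages $\bm z_n$ converge weakly to some $(x^*, v_1^*, \ldots, v_m^*) \in \zer(\bm A + \bm D + N_{\bm C})$; projecting onto each component and using \eqref{eq:PD:equivalence} gives the claimed weak convergence of $(z_n)$ and of the dual averages. For the strong convergence statement, strong monotonicity of $A$ with parameter $\gamma$ and of each $A_i^{-1}$ with parameter $\gamma_i$ makes $\bm A$ strongly monotone with parameter $\min(\gamma, \gamma_1, \ldots, \gamma_m) > 0$, so Theorem \ref{str-mon-FBFB} yields strong convergence of $\bm x_n$ to the unique zero of $\bm A + \bm D + N_{\bm C}$, which via \eqref{eq:PD:equivalence} produces strong convergence of $(x_n)$ and $(v_{1,n}, \ldots, v_{m,n})$ to the unique primal and dual solutions, respectively.

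The only substantive verification is the estimate for the Lipschitz constant of $\bm D$ and the cancellation yielding its monotonicity; everything else is bookkeeping to ensure that the product-space hypotheses match \eqref{eq:Hfitz:comp} and that the algorithmic steps align with Algorithm \ref{alg:FBFB}.
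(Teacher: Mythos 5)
Your proposal is correct and follows essentially the same route as the paper: rewrite Algorithm \ref{alg:PD} as Algorithm \ref{alg:FBFB} applied to $(\bm A,\bm D,\bm B)$ on $\bm\Hh$, verify that \eqref{eq:Hfitz:comp} yields \eqref{eq:Hfitz} in the product space (including the computation of $\varphi_{\bm B}$ and $\Ran N_{\bm C}$), and invoke Theorems \ref{thm:FBFB} and \ref{str-mon-FBFB}. The only difference is that you sketch the monotonicity and Lipschitz continuity of $\bm D$ directly, where the paper cites \cite{CombettesPesquet:2012}; this is a harmless elaboration, not a different argument.
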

\begin{proof}
  Clearly, the iterations in Algorithm \ref{alg:PD} can be for any $n \geq 1$ equivalently written as 
  \begin{align*}
    \p{y_{1, n}, y_{2, 1, n}, \ldots, y_{2, m, n}} &= \p{\bm{\Id} - \lambda_n \bm D}\p{x_n, v_{1, n}, \ldots, v_{m, n}}, \\
    \p{p_{1, n}, p_{2, 1, n}, \ldots, p_{2, m, n}} &= J_{\bm{A}}\p{y_{1, n}, y_{2, 1, n}, \ldots, y_{2, m, n}}, \\
    \p{q_n, \tilde q_{1, n}, \ldots, \tilde q_{m, n}} &= \p{\bm{\Id} - \lambda_n \bm D}\p{p_{1, n}, p_{2, 1, n}, \ldots, p_{2, m, n}}, \\
    \p{x_{n+1}, v_{1, n+1}, \ldots, v_{m, n+1}} &= J_{\lambda_n \beta_n \bm B}\Bigl(\p{x_n, v_{1, n}, \ldots, v_{m, n}} - \p{y_{1, n}, y_{2, 1, n}, \ldots, y_{2, m, n}} \\ &\mathrel{\phantom{=}} \mathop{+} \p{q_n, \tilde q_{1, n}, \ldots, \tilde q_{m, n}}\Bigr).
  \end{align*}
 with the operators $\bm{A}$, $\bm{B}$ and $\bm{D}$ defined in \eqref{defoper}. The operators $\bm{A}$ and $\bm{B}$ are maximally monotone by \cite[Proposition 20.23]{BauschkeCombettes:2011}, and the operator $\bm{D}$ 
 is monotone and Lipschitz continuous (\cite{CombettesPesquet:2012}). If   $A$ and $A_i^{-1}, i=1, \ldots, m,$  are strongly monotone, then $\bm{A}$ is strongly monotone, too. Thus the conclusion is a direct consequence of the Theorem \ref{thm:FBFB} and Theorem \ref{str-mon-FBFB} applied to the monotone inclusion problem \eqref{probprodspace}, provided that the corresponding hypotheses  \eqref{eq:Hfitz} are fulfilled.

According to \eqref{eq:Hfitz:comp}, $A + N_C$ is maximally monotone, and so is $\bm A + N_{\bm C}$. Further, from $\zer\p{A + \sum\limits_{i = 1}^m L_i^* \circ \p{A_i \bop D_i} \circ L_i + D + N_C}$ $\neq \emptyset$ it follows that $\zer\p{\bm A + \bm D+  N_{\bm C}}$ $\neq \emptyset$.

Furthermore, $\Ran N_{\bm C} = \Ran N_C \times \set{0} \times \ldots \times \set{0}$, so for all $\p{p, 0, \ldots, 0} \in \Ran N_{\bm C}$
  \begin{align*}
    \displaystyle\sum_{n\in \NN} \lambda_n \beta_n \sup_{\p{\tilde u, v_1, \ldots, v_m} \in \bm C}\p{\varphi_{\bm B}\p{\tilde u, v_1, \ldots, v_m, \frac{p}{\beta_n}, 0, \ldots, 0} - \sigma_{\bm C}\p{\frac{p}{\beta_n}, 0, \ldots, 0}}  &  = \\
    \displaystyle\sum_{n\in \NN} \lambda_n \beta_n \sup_{\tilde u \in C} \p{\varphi_B\p{\tilde u, \frac{p}{\beta_n}} - \sigma_C\p{\frac{p}{\beta_n}}} & < +\infty. 
    \end{align*}
\end{proof}

\begin{remark}\label{fbbvsfbfb}
Even if the operators $D$ and $D_i^{-1}, i=1,...,m,$ are cocoercive, one cannot make use of Algorithm \ref{alg:FBB} and of the corresponding convergence theorem in order to solve the monotone inclusion problem \eqref{probprodspace}. This is due to the fact that the operator
$$(x,v_1,...,v_m) \mapsto   \p{\sum_{i=1}^m L_i^* v_i, - L_1 x, \ldots, - L_m x}, $$
being skew, fails to be cocoercive, which means that $D$ is not cocoercive as well. This shows the importance of having iterative schemes for monotone inclusion problems involving monotone and Lipschitz continuous  operators, which are not necessarily cocoercive, as is Algorithm \ref{alg:FBFB} (see, also \cite{BotCsetnek:2013b}).
\end{remark}

\section{Examples}\label{sec:Examples}
In this section,  we discuss the fulfillment of condition (ii) in the hypotheses \eqref{eq:Hconj} and \eqref{eq:Hfitz}, for several particular instances of the operator $B$.

\begin{example}\label{ex1}
  For a convex and closed set $\emptyset \neq C \subseteq \Hh$, let $B := N_C$. Then $\zer B=C$ and (see \cite[Example 3.1]{BauschkeMclarenSendov:2006})
  \[
    \varphi_B\p{x, u} = \varphi_{N_C}\p{x, u}  = \delta_C\p{x} + \sigma_C\p{u},
  \]
  and condition (ii) in \eqref{eq:Hfitz} becomes
  \[
    \sum_{n\in \NN} \lambda_n\beta_n \p{\sup_{\tilde u \in C} \delta_C\p{\tilde u} + \sigma_C\p{\frac{p}{\beta_n}} - \sigma_C\p{\frac{p}{\beta_n}}} < +\infty,
  \]
 which is satisfied for any choice of the sequences $\p{\lambda_n}_{n \in \NN}$ and $\p{\beta_n}_{n \in \NN}$. The same applies for condition (ii) in \eqref{eq:Hconj}, where $\Psi\p{x} = \delta_C\p{x}$ and $\Psi^*\p{u} = \sigma_C\p{u}$.
\end{example}

\begin{example}\label{ex:distsq}
For a convex and closed set $\emptyset \neq C \subseteq \Hh$,  let $\Psi :  \Hh \rightarrow \RR$, $\Psi\p{x} = \frac{1}{2} d_C\p{x}^2$, where $d_C(x) = \inf_{z \in C} \|x-z\|$ and 
$B := \partial \Psi$. Then $\zer B = C$ and (see \cite[Corollary 12.30]{BauschkeCombettes:2011})
  \[
    \nabla d_C\p{x} = x - \Proj_C\p{x},
  \]
where $\Proj_C :  \Hh \rightarrow C$ denotes the projection operator on $C$. We have $\Psi = \delta_C \bop \p{\frac{1}{2} \norm{\cdot}^2}$, so $\Psi^* = \sigma_C + \frac{1}{2} \norm{\cdot}^2$. 
If $C\neq \Hh$, condition (ii) in \eqref{eq:Hconj} is therefore equivalent to (see \cite{AttouchCzarneckiPeypouquet:2011a})
  \begin{equation}\label{eq:relpro}
    \sum_{n \in \NN} \frac{\lambda_n}{\beta_n} < +\infty,
  \end{equation}
in which case  condition (ii) in \eqref{eq:Hfitz} is also fulfilled. Let also notice that the resolvent of $B$ is given by
\[
    J_{\gamma B}\p{x} = \frac{x}{\gamma + 1} + \frac{\gamma \Proj_C\p{x}}{\gamma + 1} \ \forall x \in \Hh.
  \]
\end{example}
Next, we present two examples, for which condition (ii) in \eqref{eq:Hfitz} fails for any choice of the sequence of positive penalty parameters $\p{\beta_n}_{n \in \NN}$.
\begin{example}
For a convex and closed set $\emptyset \neq C \subseteq \Hh$, let $\Psi :  \Hh \rightarrow \RR$, $\Psi\p{x} = d_C\p{x}$, and $B := \partial \Psi$. Then
(see \cite[Example 16.49]{BauschkeCombettes:2011})  
  \[
    Bx = \partial d_C \p{x} = \begin{cases}\setcond{u\in N_C\p{x}}{\norm{u} \leq 1},& \text{if } x\in C, \\ \set{\frac{x - \Proj_C\p{x}}{\norm{x - \Proj_C\p{x}}}}, & \text{otherwise.}\end{cases}
  \]
and $\zer B = C$. Since $\Psi^* = \sigma_C + \delta_{\BB}$,
  \begin{equation}\label{eq:distfunc}
    \sum_{n \in \NN} \lambda_n \beta_n \p{\Psi^*\p{\frac{p}{\beta_n}} - \sigma_C\p{\frac{p}{\beta_n}}} = \sum_{n \in \NN} \lambda_n \beta_n \delta_{\BB}\p{\frac{p}{\beta_n}}.
  \end{equation}
For $C \neq \Hh$ and arbitrary $\beta_n > 0$, with $n \in \NN$, there exists $p\in \Ran N_C$ with $\norm{p} > \beta_n$, for which expression \eqref{eq:distfunc} is equal to $+\infty$. Thus, condition (ii) in \eqref{eq:Hconj} 
is not verified.

Condition (ii) in \eqref{eq:Hfitz} fails for similar reasons. Let be $\beta_n > 0$, with $n \in \NN$, $y\in C$ and $p\in N_C\p{y}$ with $\norm{p} > \beta_n$. Then
  \[
    \p{y + tp, \frac{p}{\norm{p}}} \in \Graph \partial d_C \ \forall t > 0,
  \]
  which implies that
  \begin{align*}
    &\mathrel{\phantom{=}} \sup_{\tilde u \in C} \varphi_{\partial d_C}\p{\tilde u, \frac{p}{\beta_n}} - \sigma_C\p{\frac{p}{\beta_n}} \geq \varphi_{\partial d_C}\p{y, \frac{p}{\beta_n}} - \sigma_C\p{\frac{p}{\beta_n}} \\
    & \geq  \sup_{t >0} \p{\inpr{y}{\frac{p}{\norm{p}}} + \inpr{y + tp}{\frac{p}{\beta_n}} - \inpr{y + tp}{\frac{p}{\norm{p}}} - \inpr{y}{\frac{p}{\beta_n}}} \\
    &= \sup_{t >0} \p{t\norm{p}\p{\frac{\norm{p}}{\beta_n} - 1}} = +\infty.
  \end{align*}
\end{example}

\begin{remark}\label{comparison}
One can notice that in the previous three examples, despite of the different choices of the operator $B$, the set of its zeros is the convex and closed set $C$. In what concerns condition (ii) in \eqref{eq:Hfitz}
and \eqref{eq:Hconj}, it is satisfied for any choice of $\p{\lambda_n}_{n \in \NN}$ and $\p{\beta_n}_{n \in \NN}$ when $B=N_C$ and for the two sequences fulfilling assumption \eqref{eq:relpro} when $B=\partial 
\left(\frac{1}{2} d_C^2 \right)$, however, it fails for any choice of $\p{\lambda_n}_{n \in \NN}$ and $\p{\beta_n}_{n \in \NN}$ when $B=\partial d_C$. 
The applicability of \eqref{eq:Hfitz} and \eqref{eq:Hconj} therefore depends on the modelling of the variational inequality via the set-valued operator $B$.
\end{remark}

\begin{example}
  Let $B: \Hh \to \Hh$ be a nonzero skew linear continuous operator. So $\zer B = \ker B$ and by taking $p\in \Ran N_{\ker B} = \p{\ker B}^\perp \setminus \{0\}$, it holds for any $n \in \NN$
  \begin{align*}
    &\mathrel{\phantom{=}} \sup_{\tilde u \in \ker B} \varphi_B\p{\tilde u, \frac{p}{\beta_n}} - \sigma_{\ker B}\p{\frac{p}{\beta_n}} \\
    &= \sup_{\tilde u\in \ker B} \sup_{y\in \Hh} \p{\inpr{\tilde u}{By} + \inpr{y}{\frac{p}{\beta_n}}- \inpr{y}{By}} \\
    & \geq  \sup_{y\in \Hh} \inpr{y}{\frac{p}{\beta_n}} = +\infty.
  \end{align*}
This shows that condition (ii) in \eqref{eq:Hfitz} is not satisfied.
\end{example}

\end{document}